\newtheorem{dummy}{dummy}[section]
\newtheorem{lemma}[dummy]{Lemma}
\newtheorem{theorem}[dummy]{Theorem}
\newtheorem{corollary}[dummy]{Corollary}
\theoremstyle{definition}
\newtheorem{definition}[dummy]{Definition}
\newtheorem*{definition*}{Definition}
\newtheorem{remark}[dummy]{Remark}
\newtheorem*{acknowledgements}{Acknowledgements}
\numberwithin{equation}{section}
\newcommand{\pref}{\prettyref}
\newcommand{\colim}{\operatorname{colim}}
\newcommand{\cont}{\operatorname{cont}}
\newcommand{\Coh}{\operatorname{Coh}}
\newcommand{\Core}{\operatorname{Core}}
\newcommand{\del}{\partial}
\newcommand{\DG}{\operatorname{DG}}
\newcommand{\Exit}{\operatorname{Exit}}
\newcommand{\Fan}{\operatorname{Fan}}
\newcommand{\Fuk}{\operatorname{Fuk}}
\newcommand{\fsh}{\operatorname{fsh}}
\newcommand{\Hom}{\operatorname{Hom}}
\newcommand{\msh}{\operatorname{\mu sh}}
\newcommand{\Nbd}{\operatorname{Nbd}}
\newcommand{\ret}{\operatorname{ret}}
\newcommand{\Sk}{\operatorname{Sk}}
\newcommand{\Sh}{\operatorname{Sh}}
\newcommand{\sstar}{\operatorname{star}}
\newcommand{\UI}{\operatorname{UI}}
\newcommand{\cL}{\mathcal{L}}
\newcommand{\cM}{\mathcal{M}}
\newcommand{\cT}{\mathcal{T}}
\newcommand{\cW}{\mathcal{W}}
\newcommand{\bA}{\mathbb{A}}
\newcommand{\bC}{\mathbb{C}}
\newcommand{\bL}{\mathbb{L}}
\newcommand{\bR}{\mathbb{R}}
\newcommand{\bZ}{\mathbb{Z}}
\newcommand{\bfT}{\mathbf{T}}
\newcommand{\bfW}{\mathbf{W}}
\newcommand{\frakf}{\mathfrak{f}}
\begin{document}

\title{Sectorial covers over fanifolds}
\author[H.~Morimura]{Hayato Morimura}
\address{
Kavli Institute for the Physics and Mathematics of the Universe (WPI),
University of Tokyo,
5-1-5 Kashiwanoha,
Kashiwa,
Chiba,
277-8583,
Japan.}
\email{hayato.morimura@ipmu.jp}

\date{}
\pagestyle{plain}

\begin{abstract}
For the stopped Weinstein sector associated with any fanifold introduced in
\cite{GS1}
by Gammage--Shende,
we construct a Weinstein sectorial cover
which allows us to describe homological mirror symmetry over the fanifold as an isomorphism of cosheaves of categories.
In a special case,
our Weinstein sectorial cover gives a lift of the open cover for the global skeleton of a very affine hypersurface computed in
\cite{GS2}.
\end{abstract}

\maketitle

\section{Introduction}
Sectorial descent established in
\cite{GPS2}
by Ganatra--Pardon--Shende is one of the local-to-global behaviors of Fukaya categories.
It enables us to compute the wrapped Fukaya category
$\cW(X)$
of a Weinstein sector
$X$
by gluing that of local pieces,
provided its Weinstein sectorial cover
$X = \bigcup^n_{i = 1} X_i$.
In other words,
wrapped Fukaya categories are cosheaves with respect to Weinstein sectorial covers.
The result is valid also for partially wrapped Fukaya categories of stopped Weinstein sectors with mostly Legendrian stops away from boundaries.

Considering its computational power,
given a Weinstein sector,
it is natural to seek its Weinstein sectorial cover. 
Moreover,
when working with homological mirror symmetry(HMS),
one often needs such a cover to be compatible with that of its mirror. 
So far,
there are not so many interesting examples of Weinstein sectorial covers
\cite{CKL, GJ, GL}.
One reason for this might be easier access to other local-to-global behaviors
\cite{GS2, GS1, Lee, MSZ, PS1, PS2, PS3}.
Although they might work to solve a given problem,
construction of Weinstein sectorial covers still should be a fundamental geometric question.

In this paper,
we give a Weinstein sectorial cover of the stopped Weinstein sector
$\widetilde{\bfW}(\Phi)$
associated with a fanifold
$\Phi$.
A fanifold is introduced in
\cite{GS1}
by Gammage--Shende to discuss HMS for more general spaces than toric stacks on the $B$-side.
It is a stratified space obtained by gluing rational polyhedral fans of cones,
which provides the organizing
topological
and
discrete
data for HMS at large volume. 
To
$\Phi$
they associated an algebraic space
$\bfT(\Phi)$
obtained as the gluing of the toric varieties
$T_\Sigma$
for the fans
$\Sigma$.
Based on an idea from SYZ fibrations,
they constructed its mirror
$\widetilde{\bfW}(\Phi)$
by inductive Weinstein handle attachments.

The guiding principle behind their construction of
$\widetilde{\bfW}(\Phi)$
was that
the FLTZ Lagrangians
$\bL(\Sigma)$
and
the projections
$\bL(\Sigma) \to \Sigma$
should glue to yield
its relative skeleton 
$\widetilde{\bL}(\Phi)$
and
a map
$\pi \colon \widetilde{\bL}(\Phi) \to \Phi$.
As predicted in
\cite[Remark 4.5]{GS1},
the canonical lifts of the projections
$\bL(\Sigma) \to \Sigma$
glue to yield a version of $A$-side SYZ fibration
$\bar{\pi} \colon \widetilde{\bfW}(\Phi) \to \Phi$
after some modifications
\cite[Theorem 1.2]{Mor}. 
The map
$\bar{\pi}$
enables us to lift any cover of
$\Phi$
to
$\widetilde{\bfW}(\Phi)$.
Inspired by the idea in
\cite[Example 1.34]{GPS2}
and
using this aspect of
$\bar{\pi}$,
after some modification,
we construct a Weinstein sectorial cover of
$\widetilde{\bfW}(\Phi)$
with the desired compatibility in Section
$3$. 

\begin{theorem} \label{thm:main}
There exists a Weinstein sectorial cover of
$\widetilde{\bfW}(\Phi)$.
Moreover,
it restricts to an open cover of the relative skeleton
$\widetilde{\bL}(\Phi)$
compatible with a cover of
$\bfT(\Phi)$.
\end{theorem}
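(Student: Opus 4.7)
The plan is to build the cover downstairs on $\Phi$ and lift it via the filtered stratified fibration $\bar{\pi}$ of \pref{thm:Mor}, following the construction indicated in \cite[Example 1.34]{GPS2}. First, I would choose an open cover $\{U_\alpha\}$ of $\Phi$ subordinate to its stratification, indexed by the maximal cones appearing across the fans $\{\Sigma_S\}$, so that it matches the affine toric cover $\{V_\alpha = T_{\sigma_\alpha}\}$ of $\bfT(\Phi)$: each $U_\alpha$ should meet only the strata in the open star of a single stratum, and every nonempty intersection $\bigcap_{\alpha \in I} U_\alpha$ should correspond to the face common to the cones indexed by $I$. The combinatorial duality between the $A$-side and $B$-side covers is then tautological.

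Setting $\widetilde{W}_\alpha := \bar{\pi}^{-1}(U_\alpha)$, I would promote these open sets to Weinstein sectors. Over each $U_\alpha$ the fibration $\bar{\pi}$ is modelled, after the homotopy of \pref{thm:Mor}, on the standard integrable system attached to the FLTZ data of $\sigma_\alpha$; this local picture identifies $\widetilde{W}_\alpha$ with the Weinstein thickening of the local skeleton $\widetilde{\bL}(\Phi) \cap \widetilde{W}_\alpha$ already constructed stratum-by-stratum in \cite{GS1}. To verify the sectorial cover condition, I would show that the Liouville field of $\widetilde{\bfW}(\Phi)$ can be deformed in a collar of each boundary hypersurface $\bar{\pi}^{-1}(\partial U_\alpha)$ to become outward-transverse to that hypersurface, and that the resulting stops---the Legendrian $\widetilde{\bL}(\Phi)$ together with the new sectorial boundaries---remain mutually compatible. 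The compatibility of the final cover with $\{V_\alpha\}$ and its restriction to an open cover of $\widetilde{\bL}(\Phi)$ are then immediate from the choice of $\{U_\alpha\}$.

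The main obstacle is making the local-to-global passage from the FLTZ models over individual strata to a genuine Weinstein sectorial cover of the whole $\widetilde{\bfW}(\Phi)$. The homotopy in \pref{thm:Mor} is nontrivial exactly over strata whose fans are non-proper, so the Liouville structure and the projection $\bar{\pi}$ are not strictly compatible there; one must show that the horizontal deformations of the Liouville structure near $\bar{\pi}^{-1}(\partial U_\alpha)$ can be carried out consistently with the filtered stratified structure and without colliding with the skeleton away from the boundary walls. Once this coherence is established, the sectorial conditions on higher intersections follow by induction on the depth of the stratum, using the inductive Weinstein handle attachment construction of $\widetilde{\bfW}(\Phi)$ in \cite{GS1}.
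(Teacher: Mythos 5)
Your high-level plan matches the paper's: lift an open cover of $\Phi$ along the fibration $\bar{\pi}$ from \pref{thm:Mor}, following \cite[Example 1.34]{GPS2}. However, two parts of the proposal are off the mark.

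First, the indexing is wrong. You index the cover by ``maximal cones appearing across the fans $\{\Sigma_S\}$'' and assert it should match an affine toric cover $\{T_{\sigma_\alpha}\}$ of $\bfT(\Phi)$. But $\bfT(\Phi)$ is a colimit of toric varieties along \emph{closed immersions} (see \pref{lem:3.10}); the cosheaf $\Coh_! \circ \bfT$ assigns to each stratum $S$ the toric orbit closure $\bfT(S)$, not an affine chart, so there is no ``affine toric cover'' to match. The paper's cover is instead indexed by the $0$-strata $P$ of $\Phi$: it builds a barycentric-dual partition of $\Phi$ by connecting barycenters of adjacent strata, takes for each vertex $P$ the connected component containing it (rounded and slightly enlarged), and sets $\widetilde{\bfW}(P) = \bar{\pi}^{-1}$ of that block. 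In the hypersurface case this gives one piece per ray $\rho_P$ of $\Sigma$, which is exactly the indexing that makes the skeleton restriction match the open cover of $\del_\infty \bL(\Sigma)$ in \pref{lem:cover}: the rays give the actual open pieces and higher cones arise as intersections, whereas maximal cones sit at the opposite end of that poset and give the smallest, not a cover.

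Second, your sectoriality verification is too vague to close the argument. Proposing to ``deform the Liouville field to be outward-transverse'' to $\bar{\pi}^{-1}(\del U_\alpha)$ skips the key technical mechanism in \pref{lem:W-sector}: the paper perturbs the boundary walls to be locally perpendicular to each stratum downstairs, then exploits the fact that over a handle $T^*\widehat{M}_{S^{(k)}} \times T^* S^{(k)}_\circ$ the map $\bar{\pi}$ kills the base direction in $T^*\widehat{M}_{S^{(k)}}$ and the fiber direction in $T^*S^{(k)}_\circ$. This forces $\del^2\widetilde{\bfW}(P)$ to factor locally as a product of a cotangent bundle with a piece of $\del T^* S^{(k)}_\circ$, so a defining function is built explicitly by gluing cotangent scalings, and its Hamiltonian vector field is automatically outward-pointing. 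Relatedly, the obstacle you flag about the nontrivial homotopy in \pref{thm:Mor} over non-proper fans does not arise in the paper's argument; the perpendicularity perturbation and the resulting product structure bypass it. Finally, checking that the cover is \emph{Weinstein} (not just sectorial) requires observing, as in \pref{lem:W-SC}, that the symplectic reductions of all strata of the sectorial cover are again preimages under $\bar{\pi}$ of strata of the base cover and therefore deform to Weinstein; this step is absent from your sketch.
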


When
$\Phi$
is the fanifold from
\cite[Example 4.23]{GS1},
$\widetilde{\bL}(\Phi)$
coincides with the skeleton
$\Core(H)$
\cite[Theorem 6.12]{GS2}
of a very affine hypersurface
$H$.
In Section
$4$,
we show that our cover gives a lift of the open cover
\cite[Corollary 4.8]{GS2}
of
$\Core(H)$,
giving an affirmative answer to the conjecture raised in
\cite[Section 2.5]{GS2}
and
making the heuristic of
\cite[Section 2.1]{GS2}
completely rigorous.

Via sectorial descent we obtain a description of HMS for
$(\widetilde{\bfW}(\Phi), \bfT(\Phi))$
as an isomorphism of cosheaves of categories.
See Section
$3.4$
for the precise definitions of the associated cosheaves.
The isomorphism can be regarded as a dual to
\cite[Theorem 5.3]{GS1}
and
encodes the gluing of coherent constructible correspondence
\cite{Kuw}
for local pieces. 

\begin{corollary}[\pref{thm:HMS}]
There is an isomorphism
$\bar{\pi}_* \Fuk_*
\cong
\Coh_! \circ \bfT$
of cosheaves of categories over
$\Phi$
whose global section yields an equivalence
$\Fuk(\widetilde{\bfW}(\Phi))
\simeq
\Coh(\bfT(\Phi))$.
\end{corollary}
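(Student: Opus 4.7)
The plan is to combine \pref{thm:main} with the sectorial descent theorem of Ganatra--Pardon--Shende \cite{GPS2} and Kuwagaki's coherent-constructible correspondence \cite{Kuw}. The Weinstein sectorial cover produced by \pref{thm:main} is designed to lift, via $\bar{\pi}$, a cover of $\bfT(\Phi)$; this compatibility is precisely what allows the two sides to be identified not just globally but at the level of cosheaves over $\Phi$.

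First I would make the two cosheaves in the statement fully explicit. On the symplectic side, for an open $U \subset \Phi$ one sets $\bar{\pi}_* \Fuk_*(U) := \cW(\bar{\pi}^{-1}(U))$, where $\bar{\pi}^{-1}(U) \subset \widetilde{\bfW}(\Phi)$ is a Weinstein subsector thanks to \pref{thm:Mor}, and the cosheaf structure is given by pushforward along Weinstein sector inclusions as in \cite{GPS1}. On the algebraic side, $(\Coh_! \circ \bfT)(U)$ is the category of coherent sheaves on the open subspace $\bfT(\Phi)|_U$ with cosheaf structure given by $!$-pushforward along open inclusions. Restricting the cover from \pref{thm:main} to $\bar{\pi}^{-1}(U)$ and applying sectorial descent expresses $\bar{\pi}_* \Fuk_*(U)$ as a colimit, indexed by nonempty subsets of the restricted cover, of partially wrapped Fukaya categories of intersections of local pieces.

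Next I would establish the local equivalences. By construction, each piece of the cover and each of its iterated intersections is, up to Weinstein homotopy, a stopped Weinstein sector of the form dictated by a single fan $\Sigma$ in the stratification of $\Phi$; this is exactly the setting in which \cite{Kuw} provides an equivalence with $\Coh(T_\Sigma)$. Assembling these local equivalences into a map of cosheaves requires checking naturality with respect to inclusions: the pushforward along a Weinstein inclusion between pieces corresponding to nested fans $\Sigma' \subset \Sigma$ must intertwine, under CCC, the $!$-pushforward along the open inclusion $T_{\Sigma'} \hookrightarrow T_{\Sigma}$. Because both the Weinstein cover of $\widetilde{\bfW}(\Phi)$ and the algebraic cover of $\bfT(\Phi)$ are glued from the same combinatorial data supplied by $\Phi$, this compatibility can be checked fan by fan. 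Taking the colimit of the matched local equivalences on both sides then produces the desired isomorphism of cosheaves, and evaluating on $U = \Phi$ gives the global equivalence $\Fuk(\widetilde{\bfW}(\Phi)) \simeq \Coh(\bfT(\Phi))$.

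The main obstacle will be this naturality check: upgrading the pointwise CCC equivalences to a morphism of cosheaves requires identifying the gluing data on the two sides, i.e.\ matching the descent diagrams in \cite{GPS2} for the A-side cover with the Zariski descent diagrams for $\Coh$ on the B-side. The technical content here is verifying that the sector-inclusion pushforwards on intersections that appear in the colimit are sent by CCC to the correct $!$-pushforward along open toric inclusions. Once this is in place, everything else -- existence and naturality of the local cover from \pref{thm:main}, fiberwise descent, and global extraction of $\Fuk$ and $\Coh$ -- is standard.
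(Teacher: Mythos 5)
Your route is genuinely different from the paper's, and it also mischaracterizes the $B$-side cosheaf. The paper proves the corollary by concatenating two intermediate isomorphisms, both of which reduce the claim to results already in the literature. First, Lemma~\ref{lem:topological} identifies $\bar{\pi}_* \Fuk_*$ with the microlocal sheaf cosheaf $\pi_* \msh^{op}_{\widetilde{\bL}(\Phi)*}$: over each sufficiently small neighborhood in $\Phi$, the inverse image under $\bar\pi$ is a stopped Weinstein sector whose relative skeleton is the inverse image under $\pi$, so \cite[Theorem 1.4]{GPS3} gives the canonical equivalence, and one checks that corestriction functors on one side intertwine pushforward functors on the other. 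Second, Lemma~\ref{lem:5.3} is simply a restatement of \cite[Theorem 5.3]{GS1}, which already supplies the isomorphism $\pi_* \msh^{op}_{\widetilde{\bL}(\Phi)*} \cong \Coh_!\circ\bfT$. Your plan of matching local CCC equivalences fan by fan and then verifying that the pushforwards along sector inclusions correspond to the $!$-pushforwards on the $B$-side is in essence re-deriving the content of GS1's Theorem 5.3 from scratch. It might well work, but it is a much longer road, and you correctly identify its hardest step (the naturality of the gluing data) without noticing that this is precisely what GS1 already proved. The paper buys economy by factoring through microlocal sheaves; your route would buy a self-contained proof at the cost of redoing the GS1 descent argument.

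One concrete error to flag: you describe $(\Coh_!\circ\bfT)(U)$ as coherent sheaves on an \emph{open} subspace of $\bfT(\Phi)$ with $!$-pushforward along \emph{open} inclusions. In the paper, the cosheaf $\Coh_!\circ\bfT$ is defined on $\Exit(\Phi)^{op}$ and its structure maps are $!$-pushforwards along \emph{closed} immersions of toric orbit closures (the toric variety of a quotient fan $\Sigma/\sigma$ is the closed subvariety $\overline{O(\sigma)}\subset T_\Sigma$, not an open subset). Correspondingly, the compatibility asserted in Theorem~\ref{thm:main} is with a cover of $\bfT(\Phi)$ by closed subvarieties, not open subsets. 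This matters because the CCC dictionary matches $A$-side stop removal with $B$-side closed immersions, and mislabeling the map type would derail the naturality verification you propose.
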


Our construction of the cover is close to the more general strategy suggested in
\cite{BC}
by Bai--Côté.
When
$X = T^* M$,
they triangulate
$M$
and
cover
$X$
by the cotangent bundles
$T^* \sstar(v_\alpha)$
of the stars of the vertices
$v_\alpha$.
When
$X$
is a polarizable Weinstein manifold,
they deform
$X$
to another Weinstein manifold with arboreal skeleton
which by
\cite[Section 4.2]{BC}
admits
a canonical Whitney stratification,
in particular,
a triangulation due to Goresky.
Cover the deformation of
$X$
by arboreal thickenings of the stars of the vertices. 
Unfortunately,
there are obstructions to making their strategy rigorous
which involves developing a good theory of arboreal sectors.
 
Another general strategy might come from simplicial decompositions
introduced in
\cite{Asp}
by Asplund.
By
\cite[Theorem 1.4]{Asp}
a simplicial decomposition of a Weinstein manifold
$X$
bijectively corresponds to its sectorial cover
which is good in the sense of
\cite[Section 1]{Asp}.
Since the convex completion of
$\widetilde{\bfW}(\Phi)$
is an ordinary Weinstein manifold,
by
\cite[Lemma 3.13]{Asp}
one could refine our Weinstein sectorial cover to be a good sectorial cover of the convex completion.
It would be an interesting problem to find a simplicial decomposition of a Weinstein manifold
corresponding to a Weinstein sectorial cover
and
compatible with the cover of its given mirror.

\begin{acknowledgements}
The author was supported by SISSA PhD scholarships in Mathematics.
He is grateful to an anonymous referee for careful reading and helpful comments.
This work was partially supported by
JSPS KAKENHI Grant Number
JP23KJ0341.  
\end{acknowledgements}

\section{Large volume limit fibrations} 
In this section,
after the construction of the stopped Weinstein sector
$\widetilde{\bfW}(\Phi)$
associated with a fanifold
$\Phi$
introduced in
\cite{GS1}
by Gammage--Shende,
we review that of the map
$\bar{\pi} \colon \widetilde{\bfW}(\Phi) \to \Phi$
from
\cite[Theorem 1.2]{Mor}
restricting to the map
$\pi \colon \widetilde{\bL}(\Phi) \to \Phi$
from the relative skeleton
$\widetilde{\bL}(\Phi)$.
The stopped Weinstein sector
$\widetilde{\bfW}(\Phi)$
is obtained by inductively attaching products of the cotangent bundles of
real tori
and
strata of
$\Phi$.
Each step requires us to modify Weinstein structures near gluing regions.

\subsection{Fanifolds}
Throughout the paper,
we will work with only those stratified spaces
$\Phi$
which satisfy the following conditions:

\begin{itemize}
\item[(i)]
$\Phi$
has finitely many strata.
\item[(ii)]
$\Phi$
is conical in the complement of a compact subset
\cite[Page 37]{GS1}. 
\item[(iii)]
$\Phi$
is given as a germ of a closed subset in an ambient manifold
$\cM$.
\item[(iv)]
The strata of
$\Phi$
are smooth submanifolds of
$\cM$.
\item[(v)]
The strata of
$\Phi$
are contractible.
\end{itemize}
We will express properties of
$\Phi$
in terms of
$\cM$
as long as they only depend on the germ of
$\Phi$.
Taking the normal cone
$C_S \Phi \subset T_S \cM$
for each stratum
$S \subset \Phi$,
one obtains a stratification on
$C_S \Phi$
induced by that of a sufficiently small tubular neighborhood
$T_S \cM \to \cM$.

\begin{definition}
The stratified space
$\Phi$
is
\emph{smoothly normally conical}
if for each stratum
$S \subset \Phi$
some choice of tubular neighborhood
$T_S \cM \to \cM$
induces locally near
$S$
a stratified diffeomorphism
$C_S \Phi \to \Phi$,
which in turn induces the identity
$C_S \Phi \to C_S \Phi$.
\end{definition}

We write
$\Fan^\twoheadrightarrow$
for the category whose objects are pairs
$(M, \Sigma)$
of a laticce
$M$
and
a stratified space
$\Sigma$
by finitely many rational polyhedral cones in
$M_\bR = M \otimes_\bZ \bR$.
For any
$(M, \Sigma), (M^\prime, \Sigma^\prime) \in \Fan^\twoheadrightarrow$
a morphism
$(M, \Sigma) \to (M^\prime, \Sigma^\prime)$
is given by
the data of a cone
$\sigma \in \Sigma$
and
an isomorphism
$M / \langle \sigma \rangle \cong M^\prime$
such that
$\Sigma^\prime
=
\Sigma / \sigma
=
\{ \tau / \langle \sigma \rangle \subset M_\bR / \langle \sigma \rangle \  | \ \tau \in \Sigma, \sigma \subset \bar{\tau} \}$.
We denote by
$\Fan^\twoheadrightarrow_{\Sigma /}$
for
$(M, \Sigma) \in \Fan^\twoheadrightarrow$
the full subcategory of objects
$(M^\prime, \Sigma^\prime)$
with
$\Sigma^\prime = \Sigma / \sigma$
for some
$\sigma \in \Sigma$.

\begin{definition}[{\cite[Definition 2.4]{GS1}}] \label{dfn:fanifold}
A
\emph{fanifold}
is a smoothly normally conical stratified space 
$\Phi \subset \cM$
equipped with the following data:
\begin{itemize}
\item
A functor
$\Exit(\Phi) \to \Fan^\twoheadrightarrow$
from the exit path category of
$\Phi$
whose value on each stratum
$S$
is a pair
$(M_S, \Sigma_S)$
of a lattice
$M_S$
and
a rational polyhedral fan
$\Sigma_S \subset M_{S, \bR}$
called the
\emph{associated normal fan}.
\item
For each stratum
$S \subset \Phi$
a trivialization
$\phi_S \colon T_S \cM \cong M_{S, \bR}$
of the normal bundle carrying the induced stratification on
$C_S \Phi$
to the standard stratification induced by
$\Sigma_S$.
\end{itemize}
These data must make the diagram
\begin{align*}
\begin{gathered}
\xymatrix{
T_S \cM \ar[r]^-{\phi_S} \ar_{}[d] & M_{S, \bR} \ar^{}[d] \\
T_{S^\prime} \cM |_S \ar[r]_-{\phi_{S^\prime}} & M_{S^\prime, \bR},
}
\end{gathered}
\end{align*}
commute for any stratum
$S^\prime$
of the induced stratification on
$\Nbd(S)$,
where the left vertical arrow is the quotient by the span of
$S^\prime$.
The right vertical arrow corresponds to the map
$M_S \to M_{S^\prime}$ 
on lattices.
\end{definition}

\begin{remark}
One can identify the exit path category
$\Exit(\Sigma)$
with
$\Fan^\twoheadrightarrow_{\Sigma /}$
as a poset via
$\sigma \mapsto [\Sigma \mapsto \Sigma / \sigma]$.
In addition,
the normal geometry to
$\sigma$
is the geometry of
$\Sigma / \sigma$.
Due to the condition
(v),
$\Exit_S(\Phi)$
is equivalent to the poset
$\Exit(\Sigma_S)$,
the full subcategory of exit paths starting at
$S$
contained inside a sufficiently small neighborhood of
$S$.
\end{remark}

\begin{lemma}[{\cite[Remark 2.12]{GS1}}] \label{lem:filtration}
Any fanifold
$\Phi$
admits a filtration
\begin{align} \label{eq:filtration1}
\Phi_0
\subset
\Phi_1
\subset
\cdots
\subset
\Phi_n = \Phi, \
n = \dim \Phi
\end{align}
where
$\Phi_k$
are subfanifolds defined as sufficiently small neighborhoods of $k$-skeleta
$\Sk_k(\Phi)$,
the closure of the subset of $k$-strata.
\end{lemma}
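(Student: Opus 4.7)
The plan is to build each $\Phi_k$ as a union of small tubular neighborhoods of the strata of dimension at most $k$, using the smoothly normally conical hypothesis. First I would interpret $\Sk_k(\Phi)$ as the union (or closure of the union) of all strata of dimension $\leq k$, so that $\Sk_k(\Phi) \subset \Sk_{k+1}(\Phi)$ is automatic and $\Sk_n(\Phi) = \Phi$ holds because $\dim \Phi = n$ forces every stratum to have dimension at most $n$. The task then reduces to showing that a sufficiently small open neighborhood of $\Sk_k(\Phi)$ inside $\Phi$ carries the structure of a fanifold induced by restriction from $\Phi$.

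For each stratum $S \subset \Phi$, the smoothly normally conical condition combined with the trivialization $\phi_S \colon T_S \cM \cong M_{S, \bR}$ produces an open neighborhood $U_S \subset \Phi$ of $S$ stratified-diffeomorphic to an open neighborhood of the zero section of the conical bundle $C_S \Phi \to S$, with the stratification induced by $\Sigma_S$. Condition (i) lets me work with only finitely many such $U_S$, and condition (ii) lets me arrange them to be conical outside a compact set. I would further shrink them so that whenever $S \subset \bar{S}^\prime$, the intersection $U_S \cap U_{S^\prime}$ is compatible with the commutative square in \pref{dfn:fanifold} relating $\phi_S$ and $\phi_{S^\prime}$. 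Then I set $\Phi_k := \bigcup_{\dim S \leq k} U_S$, which is an open neighborhood of $\Sk_k(\Phi)$ in $\Phi$, satisfies $\Phi_k \subset \Phi_{k+1}$ tautologically, and exhausts $\Phi$ when $k = n$.

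The fanifold structure on $\Phi_k$ is the one inherited from $\Phi$: its strata are the intersections of strata of $\Phi$ with $\Phi_k$, the associated functor $\Exit(\Phi_k) \to \Fan^\twoheadrightarrow$ is the restriction of that on $\Phi$, and each trivialization $\phi_S$ restricts. Conditions (i)--(v) for $\Phi_k$ then follow directly from those for $\Phi$ together with the coherent choice of $U_S$. The main subtlety will be verifying the compatibility square of \pref{dfn:fanifold} for every adjacent pair of strata in $\Phi_k$, but this is a local assertion inside a single $U_{S^\prime}$, where the fanifold data reduces to the combinatorics of the single fan $\Sigma_{S^\prime}$; the compatibility therefore transfers verbatim from $\Phi$ to its open subset $\Phi_k$.
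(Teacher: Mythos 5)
Your construction is correct in outline, but it takes a genuinely different route from the paper's. You build $\Phi_k$ ``top-down'' as a union $\bigcup_{\dim S \le k} U_S$ of tubular neighborhoods furnished by the smoothly normally conical hypothesis, and then argue that the inherited stratification, the restricted functor $\Exit(\Phi_k) \to \Fan^\twoheadrightarrow$, and the restricted trivializations $\phi_S$ assemble into a fanifold structure because the compatibility square in Definition~\ref{dfn:fanifold} is a local condition stable under restriction to open subsets. The paper instead builds $\Phi_k$ ``bottom-up'' by an inductive gluing
$\Phi_k = \Phi_{k-1} \#_{\bigsqcup_S (\Sigma_S \times \del_{in} S)} \bigsqcup_S (\Sigma_S \times S)$,
attaching to $\Phi_{k-1}$ the product fanifolds $\Sigma_S \times S$ over the interior $k$-strata $S$, with base case $\Phi_0 = \bigsqcup_P \Sigma_P$. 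Both establish that each level is a subfanifold containing $\Sk_k(\Phi)$ and that the filtration is monotone. What your approach buys is brevity: the existence of the filtration is essentially read off from the definition of a fanifold. What the paper's approach buys, and what you lose, is the \emph{explicit handle-by-handle decomposition} $\Phi_k = \Phi_{k-1} \# \bigsqcup_S (\Sigma_S \times S)$, which is not incidental bookkeeping but the precise combinatorial scaffold for the Weinstein handle attachments
$\bfW(\Phi_{k-1}) \#_{\cL_k} \bigsqcup_{S^{(k)}} (T^*\widehat{M}_{S^{(k)}} \times T^*S^{(k)}_\circ)$
that construct $\widetilde{\bfW}(\Phi)$ in Section~3.3. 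If you only prove the lemma your way, you still need to recover the gluing presentation of $\Phi_k$ from $\Phi_{k-1}$ before the rest of the paper can proceed, at which point you will have reproduced the paper's argument. One minor point to tighten in your write-up: the compatibility of $U_S$ and $U_{S'}$ for two $k$-strata $S, S'$ meeting along a lower stratum $S''$ is only indirectly controlled, via the two squares relating $\phi_{S''}$ to $\phi_S$ and to $\phi_{S'}$; the definition gives no direct square between $\phi_S$ and $\phi_{S'}$, so this step deserves an explicit sentence rather than being folded into ``transfers verbatim.''
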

\begin{proof}
Suppose that
$\Phi_{k-1}$
has the desired filtration
$\Phi_0 \subset \Phi_1 \subset \cdots \subset \Phi_{k-1}$,
where
$\Phi_0$
is the disjoint union
$\bigsqcup_P \Sigma_P$
for all $0$-strata
$P \subset \Phi$
equipped with the canonically induced fanifold structure.
The ideal boundary
$\del_\infty S$
might have some subset
$\del_{in} S$
which is in the direction to the interior of
$\Phi$.
Perform the gluing
$\Phi_{k-1} \#_{(\Sigma_S \times \del_{in} S)} (\Sigma_S \times S)$
which
equals
$\Sigma_S \times S$
when
$\Phi_{k-1}$
is empty
and
equals
$\Phi_{k-1}$
unless
$S$
is an
\emph{interior}
$k$-stratum,
i.e.
$\del_{in} S = \del S_\circ$
where
$S_\circ = S \setminus \Phi_{k-1}$
is a manifold-with-boundary.
Let
\begin{align*}
\Phi_k = \Phi_{k-1} \#_{\bigsqcup_S (\Sigma_S \times \del_{in} S)} \bigsqcup_S (\Sigma_S \times S)
\end{align*}
be the result of such gluings for all $k$-strata
$S \subset \Phi$.
Then
$\Phi_k \subset \Phi$
is a subfanifold containing
$\Sk_k(\Phi)$
since the products
$\Sigma_S \times S$
are canonically fanifolds.
\end{proof}

\subsection{Weinstein handle attachments}
Throughout the paper,
we work with Liouville manifolds of finite type
so that
any Liouville manifold
$X$
will be the completion of some Liouville domain
$X_0$.
Then its skeleton
$\Core(X)$
will always be compact.
We use the symbol
$\del_\infty X$
to denote both
the ideal boundary of
$X$
and
the actual boundary of
$X_0$
to preserve the symbols
$\del X_0, \del X$.
All Liouville manifolds of our interest are Weinstein.
Namely,
the Liouville vector field
$Z$
of each
$X$
is gradient-like for a Morse--Bott function
$\phi \colon X \to \bR$
which is constant on the cylindrical end
$(\bR_{t \geq 0} \times \del_\infty X, e^t (\lambda |_{\del_\infty X}))$.
Hence
$\Core(X)$
will also be isotropic by
\cite[Lemma 11.13(a)]{CE}.

Let
$W$
be a Weinstein domain with a smooth Legendrian
$\cL \subset \del_\infty W$.
Fix a standard neighborhood of
$\cL$
in
$\del_\infty W$
\begin{align*}
\eta
\colon
\Nbd_{\del_\infty W}(\cL)
\hookrightarrow
J^1 \cL
=
T^* \cL \times \bR
\end{align*}
which extends to a neighborhood in the Weinstein domain
$W$
\begin{align*}
\xi
\colon
\Nbd_W(\cL)
\hookrightarrow
J^1 \cL \times \bR_{\leq 0}
\cong
T^*(\cL \times \bR_{\leq 0}),
\end{align*}
where the Liouville flow on
$W$
gets identified with the translation action on
$\bR_{\leq 0}$.
By
\cite[Proposition 2.9]{Eli}
one can modify the Weinstein structure near
$\cL$
so that
the Liouville flow gets identified with the cotangent scaling on
$T^*(\cL \times \bR_{\leq 0})$.
We denote by
$\widetilde{W}$
its conic completion,
which is a Weinstein sector by
\cite[Lemma 2.13]{GPS1}.
Then
$\eta$
induces a neighborhood 
$\widetilde{\eta}
\colon
\Nbd_{\del_\infty \widetilde{W}}(\cL)
\hookrightarrow
J^1 \cL$
which extends to a neighborhood
$\widetilde{\xi}
\colon
\Nbd_{\widetilde{W}}(\cL)
\hookrightarrow
J^1 \cL \times \bR_{\leq 0}$.

Given Weinstein domains
$W, W^\prime$
with smooth Legendrian embeddings
$\del_\infty W \hookleftarrow \cL \hookrightarrow \del_\infty W^\prime$,
we write
$W \#_\cL W^\prime$
for the gluing
\begin{align*}
W \#_{\eta^{-1}(T^* \cL \times \{ 0 \}) \cong \eta^{\prime -1}(T^* \cL \times \{ 0 \})} W^\prime
=
\widetilde{W} \cup_{J^1 \cL} \widetilde{W^\prime}
\end{align*}
which yields a Weinstein sector with skeleton
\begin{align*}
\Core(W \#_\cL W^\prime)
=
\Core(W, \cL)
\cup_\cL
\Core(W^\prime, \cL)
\end{align*}
where
$\Core(W, \cL)$
denotes the relative skeleton of the stopped Weinstein domain
$(W, \cL)$,
i.e.
the disjoint union
$\Core(W) \sqcup \bR \cL \subset W$
of
$\Core(W)$
and the saturation of
$\cL$
by the Liouville flow.

\begin{definition}[{\cite[Definition 4.9, Lemma 4.10]{GS1}}]
Let
$W$
be a Weinstein domain with a smooth Legendrian
$\cL \subset \del_\infty W$.
Then a Lagrangian
$\bL \subset W$
is
\emph{biconic}
along
$\cL$
if and only if
it is conic
and
there exists some
$\xi
\colon
\Nbd_W(\cL)
\hookrightarrow
T^*(\cL \times \bR_{\leq 0})$
such that
\begin{align*}
\xi(\bL \cap \Nbd_W(\cL))
\subset
T^* \cL \times \{ 0 \} \times \bR_{\leq 0}
\subset
J^1 \cL \times \bR_{\leq 0}
\cong
T^*(\cL \times \bR_{\leq 0}),
\end{align*}
where the Liouville flow on
$W$
gets identified with the translation action on
$\bR_{\leq 0}$.
\end{definition}

Given biconic Lagrangians
$\bL \subset W, \bL^\prime \subset W^\prime$
with matching ends
\begin{align*}
\widetilde{\eta}(\bL \cap \Nbd_{\del_\infty W}(\cL))
=
\widetilde{\eta}^\prime(\bL^\prime \cap \Nbd_{\del_\infty W^\prime}(\cL))
\end{align*}
in
$W \#_\cL W^\prime$,
we write
$\bL \#_\cL \bL^\prime$
for the gluing
$\widetilde{\bL}
\cup_{\widetilde{\eta}(\bL \cap \Nbd_{\del_\infty W}(\cL))}
\widetilde{\bL^\prime}$
in
$W \#_\cL W^\prime$,
where
$\widetilde{\bL} \subset \widetilde{W},
\widetilde{\bL^\prime} \subset \widetilde{W^\prime}$
denote the saturations of
$\bL, \bL^\prime$
under the Liouville flows.
Since any biconic subsets in
$W, W^\prime$
remain conic in
$W \#_\cL W^\prime$,
the gluing
$\bL \#_\cL \bL^\prime$
is a conic Lagrangian.

For a closed manifold
$\widehat{M}$
and
a compact manifold-with-boundary
$S$,
consider the Weinstein manifold-with-boundary
$\widetilde{W^\prime} = T^* \widehat{M} \times T^* S$
with a smooth Legendrian
$\cL = \widehat{M} \times \del S$
taken to be a subset of the zero section.
Here,
we equip
$T^* S$
the tautological Liouville structure
so that 
the Liouville vector field is the generator of fiberwise radial dilation.
Then the Liouville flow on
$\widetilde{W^\prime}$
near
$\cL$
is the cotangent scaling.
We write
$W^\prime$
for a domain completing to
$\widetilde{W^\prime}$.
One can check that
the above gluing procedure carries over,
although
$\cL$
does not belong to
$\del_\infty W^\prime$.

\begin{remark} \label{rmk:2.3}
As explained in
\cite[Example 2.3]{GPS1},
when
$S$
is noncompact
$T^* S$
equipped with the tautological Liouville form is not even a Liouville manifold.
However,
if
$S$
is the interior of a compact manifold-with-boundary,
then
$T^* S$
becomes a Liouville manifold after a suitable modification of the Liouville form near the boundary making it convex.
We
use the same symbol
$T^* S$
to denote the result
and
regard it as a stopped Weinstein sector.
\end{remark}

\begin{definition}[{\cite[Definition 4.11]{GS1}}] \label{dfn:extension}
Let
$W$
be a Weinstein domain with a smooth Legendrian embedding
$\widehat{M} \times \del S \hookrightarrow \del_\infty W$.

(1)
A
\emph{handle attachment}
is the gluing
$W \#_{\widehat{M} \times \del S} W^\prime$
respecting the product structure.

(2)
If
$\bL \subset W$
is a biconic subset along
$\widehat{M} \times \del S$
which locally factors as
\begin{align*}
\eta(\bL \cap \Nbd_{\del_\infty W}(\widehat{M} \times \del S))
=
\bL_S \times \del S
\subset
T^*\widehat{M} \times T^* \del S
\end{align*}
for some fixed conic Lagrangian
$\bL_S$,
then the
\emph{extension}
of
$\bL$
through the handle is the gluing
$\bL \#_{\widehat{M} \times \del S} (\bL_S \times S)$
respecting the product structure.
\end{definition}

Let
$\Sigma \subset M_\bR$
be a rational polyhedral fan
and
$\widehat{M}$
the real $n$-torus
$\Hom(M, \bR / \bZ) = M^\vee_\bR / M^\vee$.
Recall from
\cite[Section 3.1]{FLTZ12}
that
the
\emph{FLTZ Lagrangian}
is the union
\begin{align*}
\bL(\Sigma)
=
\bigcup_{\sigma \in \Sigma} \bL_\sigma
=
\bigcup_{\sigma \in \Sigma} \sigma^\perp \times \sigma
\end{align*}
of conic Lagrangians
$\sigma^\perp \times \sigma
\subset
\widehat{M} \times M_\bR
\cong
T^*\widehat{M}$,
where
$\sigma^\perp$
is the real $(n - \dim \sigma)$-subtorus
\begin{align*}
\{ x \in \widehat{M} \ | \ \langle x, v \rangle = 0 \ \text{for all } v \in \sigma \}.
\end{align*}

\begin{lemma}[{\cite[Lemma 4.16]{GS1}}] \label{lem:coordinates}
Let
$\Sigma \subset M_\bR$
be a rational polyhedral fan.
Then near the Legendrian
$\del_\infty \bL_\sigma \subset \del_\infty T^* \widehat{M}$
for each cone
$\sigma \in \Sigma$
there are local coordinates
\begin{align*}
\eta_\sigma
\colon
\Nbd(\del_\infty \bL_\sigma)
\hookrightarrow
J^1 \del_\infty \bL_\sigma = T^* \del_\infty \bL_\sigma \times \bR 
\end{align*}
with the following properties.
\begin{itemize}
\item[(1)]
The Lagrangian
$\bL_\sigma$
is biconic along
$\del_\infty \bL_\sigma$.
\item[(2)]
For any nonzero cones
$\sigma, \tau \in \Sigma$
with
$\sigma \subset \bar{\tau}$,
we have
\begin{align} \label{eq:recursive}
\eta_\sigma (\del_\infty \bL_\tau \cap \Nbd(\del_\infty \bL_\sigma))
=
\bL_{\tau / \sigma} \times \del_\infty \sigma \times \{ 0 \}
\subset
T^* \sigma^\perp \times T^* \del_\infty \sigma \times \{ 0 \} 
=
T^* \del_\infty \bL_\sigma \times \{ 0 \}.
\end{align}
\item[(3)]
The FLTZ Lagrangian
$\bL(\Sigma)$
is biconic along each
$\del_\infty \bL_\sigma$.
\item[(4)]
For each
$\sigma \in \Sigma$
the local coordinates
$\eta_\sigma$
define a Weinstein hypersurface
\begin{align*}
R_\sigma = \eta^{-1}_\sigma (T^* \del_\infty \bL_\sigma \times \{ 0 \})
\subset
\del_\infty T^* \widehat{M}
\end{align*}
containing
$\del_\infty \bL_\sigma$
as its skeleton
and
$R_\tau \cap \Nbd(\del_\infty \bL_\sigma) \subset R_\sigma$
for any
$\tau \in \Sigma$
with
$\sigma \subset \bar{\tau}$.
\end{itemize}
\end{lemma}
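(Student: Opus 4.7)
The plan is to construct each $\eta_\sigma$ explicitly by using a product decomposition of $T^*\widehat{M}$ near $\bL_\sigma$ induced by a lattice splitting adapted to $\sigma$, and then to read off properties (1)--(4) from this explicit form.

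First I would fix, for each nonzero cone $\sigma \in \Sigma$, a complementary sublattice $M''$ with $M = M_\sigma \oplus M''$, where $M_\sigma := M \cap \langle\sigma\rangle_\bR$. This induces decompositions $\widehat{M} = \widehat{M_\sigma} \times \widehat{M''}$ and $T^*\widehat{M} = T^*\widehat{M_\sigma} \times T^*\widehat{M''}$, under which $\sigma^\perp = \{0\} \times \widehat{M''}$ and $\bL_\sigma$ becomes the product of the conic Lagrangian $\{0\} \times \sigma \subset T^*\widehat{M_\sigma}$ with the zero section of $T^*\widehat{M''}$. Applying the Legendrian neighborhood theorem inside $\del_\infty T^*\widehat{M_\sigma}$ gives canonical jet coordinates on a neighborhood of $\del_\infty(\{0\} \times \sigma)$ valued in $J^1\del_\infty(\{0\}\times\sigma)$, and I would take $\eta_\sigma$ to be the product of these with the identity on the $T^*\widehat{M''}$-factor, using the identification
\begin{align*}
J^1\del_\infty\bL_\sigma
=
J^1(\widehat{M''} \times \del_\infty\sigma)
=
T^*\widehat{M''} \times J^1\del_\infty\sigma.
\end{align*}

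Property (1) is then immediate: the $T^*\widehat{M''}$-component of $\bL_\sigma$ is the zero section, while the $T^*\widehat{M_\sigma}$-component is radially conic and saturates into the required $\bR_{\leq 0}$-slice after cylindrical extension of $\eta_\sigma$ into the interior. The main step is property (2). For $\sigma \subset \bar\tau$, the projection $M_\bR \twoheadrightarrow M''_\bR$ carries $\tau$ onto $\tau/\sigma$, and $\tau^\perp = \{0\} \times (\tau/\sigma)^\perp$ inside $\widehat{M_\sigma} \times \widehat{M''}$. Asymptotically in the direction of $\del_\infty\sigma$, the cone $\tau$ looks like the product $\sigma \times (\tau/\sigma)$ in the splitting, so $\del_\infty\bL_\tau$ near $\del_\infty\bL_\sigma$ takes the form
\begin{align*}
(\tau/\sigma)^\perp \times (\tau/\sigma) \times \del_\infty\sigma
=
\bL_{\tau/\sigma} \times \del_\infty\sigma \times \{0\},
\end{align*}
the final $\{0\}$ recording that $\bL_\tau$ lies in the base, away from the Reeb direction. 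Property (3) then follows from (2), since the only components of $\bL(\Sigma)$ meeting $\Nbd(\del_\infty\bL_\sigma)$ arise from cones $\tau$ with $\sigma \subset \bar\tau$.

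For property (4), I would set $R_\sigma := \eta_\sigma^{-1}(T^*\del_\infty\bL_\sigma \times \{0\})$, which is Weinstein as it is modeled on the cotangent bundle $T^*\del_\infty\bL_\sigma$ with its tautological Liouville structure, having $\del_\infty\bL_\sigma$ as its zero-section skeleton. The containment $R_\tau \cap \Nbd(\del_\infty\bL_\sigma) \subset R_\sigma$ then follows from the recursive form in (2). I expect the main technical obstacle to be arranging that the splittings $M = M_\sigma \oplus M''$ are chosen coherently across all $\sigma \in \Sigma$, so that for each face inclusion $\sigma \subset \bar\tau$ the $\tau$-splitting genuinely refines the $\sigma$-splitting and the coordinates $\eta_\tau, \eta_\sigma$ nest (not merely up to isotopy). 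This can be arranged by fixing an inner product on $M_\bR$ and using orthogonal complements for the smooth coordinates, or equivalently by building the splittings inductively on $\dim\sigma$ starting from the zero cone and extending along each face inclusion. Once this coherence is fixed, the verification of (1)--(4) is essentially read off from the explicit product formulas above.
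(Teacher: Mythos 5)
This statement is quoted from \cite[Lemma 4.16]{GS1} and the paper does not reprove it, so there is no in-paper argument to compare against; I will evaluate your reconstruction on its own terms. Your strategy---split $M$ along a complement to $M_\sigma = M\cap\langle\sigma\rangle_\bR$, write $\bL_\sigma$ as the product of the conormal $\{0\}\times\sigma$ in $T^*\widehat{M_\sigma}$ with the zero section of $T^*\widehat{M''}$, and take jet coordinates respecting this product---is the natural one, matches the spirit of GS1's construction, and does yield (1)--(4) once the charts are pinned down. Your observation that the only $\bL_\tau$ meeting $\Nbd(\del_\infty\bL_\sigma)$ are those with $\sigma\subset\bar\tau$, and that (3) therefore follows from (2), is correct, and so is the identification of coherence of the splittings as the real technical point.

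Two places want more care. First, you correctly flag that the splittings must nest, but your proposed fix ``fix an inner product on $M_\bR$ and use orthogonal complements'' does not directly give lattice splittings: the orthogonal complement of $M_{\sigma,\bR}$ need not be a rational subspace, so it need not arise from a sublattice $M''\subset M$, and without a lattice splitting $M=M_\sigma\oplus M''$ the torus $\widehat{M}$ does not decompose as $\widehat{M_\sigma}\times\sigma^\perp$. One should either choose the lattice complements $M''_\sigma$ inductively over $\Exit(\Sigma)$ (using that $M_\tau\cap M''_\sigma$ is saturated in $M''_\sigma$ when $\sigma\subset\bar\tau$), or observe that the contactomorphism $\eta_\sigma$ only requires a tubular-neighborhood trivialization of $\sigma^\perp\subset\widehat{M}$ together with a real splitting of $M_\bR$, which can be taken orthogonal; but one of these should be said explicitly. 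Second, the passage in (2) from ``asymptotically, $\tau$ looks like $\sigma\times(\tau/\sigma)$ near $\del_\infty\sigma$'' to the on-the-nose equality $\eta_\sigma(\del_\infty\bL_\tau\cap\Nbd)=\bL_{\tau/\sigma}\times\del_\infty\sigma\times\{0\}$ uses that all the sets involved are exactly conic, so the Legendrian normal form can be chosen to intertwine the Liouville/Reeb dynamics and turn the asymptotic identification into a genuine one on a cylindrical neighborhood. You gesture at this (``the final $\{0\}$ recording that $\bL_\tau$ lies in the base''), but a sentence making the conic normalization explicit would close the gap.
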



\subsection{Construction}
The stopped Weinstein sector
$\widetilde{\bfW}(\Phi)$
is constructed inductively together with
$\widetilde{\bL}(\Phi)$
and
$\pi$
along the filtration
\pref{eq:filtration1}
of
$\Phi$.
When
$k = 0$,
let
$\widetilde{\bfW}(\Phi_0)
=
\bigsqcup_P T^* \widehat{M}_P, \
\widetilde{\bL}(\Phi_0)
=
\bigsqcup_P \bL(\Sigma_P)$
where each
$T^* \widehat{M}_P$
is equipped with the canonical Liouville structure.
We fix an identification
$T^* \widehat{M}_P \cong \widehat{M}_P \times M_{P, \bR}$
to regard each
$\bL(\Sigma_P)$
as a conic Lagrangian submanifold of
$T^* \widehat{M}_P$.
Let
$\pi_0 \colon \widetilde{\bL}(\Phi_0) \to \Phi_0$
be the map induced by the projection to cotangent fibers.
Then the triple
$(\widetilde{\bfW}(\Phi_0), \widetilde{\bL}(\Phi_0), \pi_0)$
satisfies the conditions
(1), \ldots, (4).
Define
$\bar{\pi}_0$
as the composition
$\ret_0 \circ \tilde{\pi}_0$,
where
$\tilde{\pi}_0$
denotes the disjoint union of the projections to the cotangent fibers
$T^* \widehat{M}_P
\to
M_{P, \bR}$
and
$\ret_0
\colon
\bigsqcup_P M_{P, \bR}
\to
\bigsqcup_P \Sigma_P$
denotes the disjoint union of maps induced by retractions.
See
\cite[Section 5.1]{Mor}
for details.
The retractions cut off the part of the image of
$\tilde{\pi}_0$
outside
$\Phi$.

Suppose that
we have constructed the triple
$(\widetilde{\bfW}(\Phi_{k-1}), \widetilde{\bL}(\Phi_{k-1}), \pi_{k-1})$
for the subfanifold
$\Phi_{k-1}$.
Let
$\cL_k$
be the disjoint union of
\begin{align*}
\cL^k_ {S^{(l)}}
=
\pi^{-1}_{k-1}(S^{(l)}) \cap \del_\infty \widetilde{\bL}(\Phi_{k-1})
\end{align*}
for all interior $l$-strata
$S^{(l)} \subset \Phi, k \leq l$.
There are smooth Legendrian embeddings
\begin{align*}
\del_\infty \bfW(\Phi_{k-1})
\hookleftarrow
\cL^k_ {S^{(l)}}
\hookrightarrow
\del \bigsqcup_{S^{(k)}} (T^* \widehat{M}_{S^{(k)}} \times T^* S^{(k)}_\circ). 
\end{align*}
Due to
the chart
\cite[Theorem 4.1(1)]{GS1}
for the previous step
and
\pref{lem:coordinates}(4),
we may define
$\widetilde{\bfW}(\Phi_k)$
as the handle attachment
\begin{align*}
\bfW(\Phi_{k-1}) \#_{\cL_k}  \bigsqcup_{S^{(k)}} (T^* \widehat{M}_{S^{(k)}} \times T^* S^{(k)}_\circ).
\end{align*}

There is a standard neighborhood
$\eta_k
\colon
\Nbd_{\del_\infty \bfW(\Phi_{k-1})}(\cL_k)
\hookrightarrow
J^1 \cL_k$
near
$\cL_k$
for which
$\widetilde{\bL}(\Phi_{k-1})$
is biconic along
$\cL_k$
and
$\del_\infty \widetilde{\bL}(\Phi_{k-1})$
locally factors as
\begin{align*}
\eta_k(\widetilde{\bL}(\Phi_{k-1}) \cap \Nbd_{\del_\infty \bfW(\Phi_{k-1})}(\cL_k))
=
\bigsqcup_{S^{(k)}}
(\bL(\Sigma_{S^{(k)}}) \times \del S^{(k)}_\circ).
\end{align*}
We define
$\widetilde{\bL}(\Phi_k)$
as the extension through the disjoint union of the handles
$T^* \widehat{M}_{S^{(k)}} \times T^* S^{(k)}_\circ$
\begin{align*}
\widetilde{\bL}(\Phi_{k-1}) \#_{\cL_k}  \bigsqcup_{S^{(k)}} (\bL(\Sigma_{S^{(k)}}) \times S^{(k)}_\circ).
\end{align*}
Let
$\pi_k \colon \widetilde{\bL}(\Phi_k) \to \Phi_k$
be the map induced by
$\pi_{k-1}$
and
the projections from
$T^* \widehat{M}_{S^{(k)}} \times T^* S^{(k)}_\circ$
to
the cotangent fiber direction in
$T^* \widehat{M}_{S^{(k)}}$
and
the base direction in
$T^* S^{(k)}_\circ$.
The triple
$(\widetilde{\bfW}(\Phi_k), \widetilde{\bL}(\Phi_k), \pi_k)$
satisfies the conditions
(1), \ldots, (4).

Here,
we explain how to construct
$\bar{\pi}_k$
when attaching the handle
$T^* \widehat{M}_{S^{(k)}} \times T^* S^{(k)}_\circ$
to
$\widetilde{\bfW}(\Phi_{k-1})$
for a single interior $k$-stratum
$S^{(k)} \subset \Phi$
with
$k \leq \dim \Phi$.
Define
$\tilde{\pi}_k$
as the map canonically induced by
$\tilde{\pi}_{k-1}, \tilde{\pi}_{0, S^{(k)}}$
and
the projection
$T^* S^{(k)}_\circ \to S^{(k)}_\circ$
to the base,
where
$\tilde{\pi}_{0, S^{(k)}}
\colon
T^* \widehat{M}_{S^{(k)}} \to M_{S^{(k)}, \bR}$
denotes the projection to the cotangent fibers.
Here,
we precompose the contraction
\begin{align} \label{eq:cont}
\cont_k \colon \widetilde{\bfW}(\Phi_k) \to \widetilde{\bfW}(\Phi_k)
\end{align}
of the cylindrical ends along the negative Liouville flow to the part of
$\del_\infty \bfW(\Phi_{k-1})$
intersecting
$\del T^* S^{(k)}_\circ$.
Let
$\ret_k \colon M_{S^{(k)}, \bR} \to \Sigma_{S^{(k)}}$
be the map induced by a retraction defined similarly to
$\ret_0$.
Then
$\bar{\pi}_k$
is the map canonically induced by
$\bar{\pi}_{k-1},
\bar{\pi}_{0, S^{(k)}} = \ret_k \circ \tilde{\pi}_{0, S^{(k)}}$
and
the projection
$T^* S^{(k)}_\circ \to S^{(k)}_\circ$
to the base.

\section{Weinstein sectorial covers}
In this section,
we construct a Weinstein sectorial cover of
$\widetilde{\bfW}(\Phi)$
by lifting a suitable cover of
$\Phi$
along
$\bar{\pi}$.
The idea is similar to the construction for cotangent bundles.
However,
here we need extra care to make it compatible with the  handle attachment process.
Via sectorial descent,
our cover gives rise to a constructible cosheaf
$\bar{\pi}_* \Fuk_*$
of categories over
$\Phi$
with respect to a certain natural topology.
We show that 
$\bar{\pi}_* \Fuk_*$
is isomorphic to the $B$-side counterpart
$\Coh_! \circ \bfT$.

\subsection{Sectorial covers}
Throughout the paper,
for a stopped Liouville sector
$(X, \frakf)$
we denote by
$\cW(X, \frakf)$
the partially wrapped Fukaya category
\cite[definition 2.31]{GPS2}.
When
$\frakf = \emptyset$,
it gives the wrapped Fukaya category
$\cW(X)$
of a Liouville sector
$X$
\cite[Definition 3.36]{GPS1}.
Due to
\cite{GPS2},
given a Liouville sector
$X$,
if it admits a Weinstein sectorial cover,
then the wrapped Fukaya category
$\cW(X)$
can be computed by gluing that of pieces in the cover.

\begin{definition}[{\cite[Definition 12.19, Remark 12.20]{GPS2}}]
Let
$X$
be a Liouville manifold-with-boundary.
Suppose that
$X$
admits a cover
$X = \bigcup^n_{i = 1} X_i$
by Liouville sectors-with-sectorial-corners
$X_i \subset X$
\cite[Definition 12.14]{GPS2},
with precisely two faces
$\del^1 X_i = X_i \cap \del X$
and
the point set topological boundary
$\del^2 X_i$
of
$X_i$
meeting along the corner locus
$\del X \cap \del^2 X_i = \del^1 X_i \cap \del^2 X_i$.
Such a cover is
\emph{sectorial}
if the collection of boundaries
$\del X, \del^2 X_1, \ldots, \del^2 X_n$
is sectorial in the sense of
\cite[Definition 12.2]{GPS2}.
We also allow
$X_i$
without corners,
requiring
$\del^2 X_i$
to be disjoint from
$\del X$
and
$\del X, \del^2 X_1, \ldots, \del^2 X_n$
to be sectorial
and
$\bigcup^n_{i = 1} X_i \hookrightarrow X$
to be a trivial inclusion.
\end{definition}

For any sectorial cover
$X = \bigcup^n_{i =1} X_i$,
stratify
$X$
by strata
\begin{align} \label{eq:strata}
X_{J, K, L}
=
\bigcup_{j \in J} X_j
\cap
\bigcap_{k \in K} \del X_k \
\setminus
\bigcap_{l \in L} X_l
\end{align}
ranging over
$J \sqcup K \sqcup L = \{ 1, \ldots, n \}$.
The closure of each
$X_{J, K, L}$
is a submanifold-with-corners,
whose symplectic reduction is a Liouville sector-with-sectorial-corners.
Recall that
a sectorial cover
$X = \bigcup^n_{i =1} X_i$
is
\emph{Weinstein}
\cite[Definition 12.15]{GPS2}
if the convex completions of all of the symplectic reductions of strata
\pref{eq:strata}
are Weinstein up to deformation.
For any Weinstein sectorial cover
$X = \bigcup^n_{i = 1} X_i$
of a Weinstein sector
$X$,
the induced functor
\begin{align} \label{thm:hcolimit}
\displaystyle \colim_{\emptyset \neq I \subset \{ 1, \ldots, n \}} 
\cW(\bigcap_{i \in I} X_i)
\to
\cW (X)
\end{align}
is a pretriangulated equivalence
\cite[Theorem 1.35]{GPS2}.

\subsection{Alternative description of $\widetilde{\bfW}(\Phi)$ and $\bar{\pi}$}
Recall that
$\widetilde{\bfW}(\Phi)$
is a stopped Weinstein sector obtained by Weinstein handle attachments.
See also Remark
\pref{rmk:2.3}.
The boundary comes from conic completion of the boundaries of the contact surroundings of
$T^* \cL_k$
in
$\del_\infty{\bfW}(\Phi_{k-1})$.
Below,
preserving a sufficiently small neighborhood of
$\widetilde{\bL}(\Phi)$,
we will modify the construction in order to obtain a stopped Weinstein manifold.
See
\cite[Figure 3.1]{Eli}
to get the picture of our modification.
As in the original construction,
we proceed by induction on
$k$.

Whenever we attach a handle
$T^* \widehat{M}_{S^{(k)}} \times T^* S^{(k)}_\circ$ 
to
$\widetilde{\bfW}(\Phi_{k-1})$,
deform
$\del_\infty \widetilde{\bfW}(\Phi_{k-1})$ 
so that the complement in
$\del_\infty \widetilde{\bfW}(\Phi_{k-1})$
of the contact surrounding
$T^* \bL(\Sigma_{S^{(k)}}) \times \del S^{(k)}_\circ$
gets connected smoothly to
$T^* \widehat{M}_{S^{(k)}} \times \del_\infty T^* S^{(k)}_\circ$.
Here,
we regard
$T^* S^{(k)}_\circ$
as a Liouville sector equipped with the tautological Liouville form.
Now,
conic completion yields a Weinstein manifold without boundary.
We use the same symbol
$\widetilde{\bfW}(\Phi)$
to denote the result,
as our modification preserves a sufficiently small neighborhood of
$\widetilde{\bL}(\Phi)$.
One can also modify the map
$\bar{\pi}$
in a straightforward way.
Namely,
when precomposing
\pref{eq:cont},
we contract the cylindrical ends of
$T^* \widehat{M}_{S^{(k)}} \times T^* S^{(k)}_\circ$
to
$T^* \widehat{M}_{S^{(k)}} \times \del_\infty T^* S^{(k)}_\circ$.

\subsection{Construction}
We construct a Weinstein sectorial cover of the modification
$\widetilde{\bfW}(\Phi)$
as the inverse image under
$\bar{\pi}$
of a certain cover of
$\Phi$.
For expository reasons,
we will assume that
$\Phi$
is closed.
Since each stratum is closed,
one can take its barycenter.
Connect by an edge inside
$\Phi$
the barycenter on each top dimensional stratum with that on $1$-dimensional lower strata adjacent to it.
Repeat the process inductively until reaching the barycenters of $1$-strata.
There might be some lower dimensional closed strata
which are not adjacent to higher dimensional strata.
Beginning the process also from such strata,
we obtain a partition of
$\Phi$
defined by
the additional edges connecting the barycenters
and
the barycenters on those $1$-strata
which are not adjacent to higher dimensional strata.
The partition divides
$\Phi$
into some finite number of connected components.
 
For each $0$-stratum
$P$,
there is a unique connected component containing
$P$.
Take a suitable slight enlargement of rounding of corners of its closure.
We denote by
$\widetilde{\bfW}(P)$
and
$\del^2 \widetilde{\bfW}(P)$
the inverse image under
$\bar{\pi}$
respectively
of the slight enlargement
and
of its boundary.
Perturbing these boundaries if necessary,
we may assume that
$\del^2 \widetilde{\bfW}(P), \del^2 \widetilde{\bfW}(P^\prime)$
are
disjoint
or
intersect transversely
for any pair
$P, P^\prime$
of $0$-strata.
Perturb each edge connecting the barycenter of a top dimensional stratum
$S^{(n)}$
to that of a $1$-dimensional lower stratum
$S^{(n-1)}$
so that
it becomes locally in
$\Phi_{n-1}$
perpendicular to
$S^{(n-1)}$.
More precisely,
the edge becomes parallel to the normal direction in the handle
$\Sigma_{S^{(n-1)}} \times S^{(n-1)}
\subset
\Phi_{n-1} \setminus \Phi_{n-2}$
to 
$S^{(n-1)}$. 
This is always possible
since
$\Phi$
is smoothly normally conical.
Repeat the process inductively until reaching the barycenters of $1$-strata in
$\Phi_1$.
Perform the same perturbation to each edge starting from the barycenter of a lower dimensional closed stratum
which is not adjacent to higher dimensional strata.
We use the same symbols
$\widetilde{\bfW}(P), \del^2 \widetilde{\bfW}(P)$
to denote the results.

\begin{remark}
Our construction works even
when
$\Phi$
is not closed.
Then for each stratum we will take the barycenter of its closure,
which makes sense due to the condition
$(iii)$.
Moreover,
for instance,
we will have connected components not adjacent to vertices,
which only causes irrelevant complication in labeling.
\end{remark}

\begin{lemma} \label{lem:W-sector}
Each
$\widetilde{\bfW}(P)$
is a Weinstein sector.
\end{lemma}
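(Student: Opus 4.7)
The plan is to verify the second (equivalent) condition of \pref{dfn:L-sector}: I will exhibit, near $\del^2\widetilde{\bfW}(P)$, an identification with $F_P\times\bR$ carrying the characteristic foliation to the foliation by lines $\{pt\}\times\bR$, and then check the Weinstein condition. Start by decomposing the boundary. By construction $\widetilde{\bfW}(P)=\bar{\pi}^{-1}(N_P)$, where $N_P\subset\Phi$ is a slight rounding of the closure of the connected component containing $P$, and $\del^2\widetilde{\bfW}(P)=\bar{\pi}^{-1}(\del^2 N_P)$. The piecewise smooth set $\del^2 N_P$ is a union of finitely many perturbed edges, each of which—by the perturbation step in the construction—is locally perpendicular to the strata of $\Phi$ that it meets. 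Accordingly I would reduce the claim to a local statement near each such edge.

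The second step is the local model. Fix a boundary piece $E\subset\del^2 N_P$ lying near a stratum $S\subset\Phi$. In the preimage $\bar{\pi}^{-1}(\Nbd(E))$ one has, by the inductive construction of \pref{thm:Mor}, a product description coming from the handle $T^*\widehat{M}_S\times T^*S_\circ$ attached at the $|S|$-th stage: the base direction is the projection to $S_\circ$ and the cotangent-fiber direction is governed by the retraction $\ret_{|S|}\circ\tilde{\pi}_{0,S}$ onto $\Sigma_S$. Because $E$ is locally perpendicular to $S$, a perpendicular coordinate $t$ on $\cM$ pulls back through $\bar{\pi}$ to a function $I=t\circ\bar{\pi}$ whose Hamiltonian flow is tangent to the fibers of $\bar{\pi}$. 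This $I$ is linear along the Liouville flow in the cotangent-fiber direction (hence $ZI=\alpha I$ for some $\alpha>0$ near infinity after a small rescaling), its differential is nonzero on the characteristic foliation $C_E$ of $\del^2\widetilde{\bfW}(P)|_{E}$, and the flow of $X_I$ is outward along $E$ because $E$ is chosen on the boundary of $N_P$. Thus $I$ is an $\alpha$-defining function for the corresponding component of $\del^2\widetilde{\bfW}(P)$, and the standard splitting that $I$ produces (via \pref{lem:2.25}) yields the desired identification with $F_P\times\bR$.

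The third step is globalization and Weinsteinness. The defining functions $I_E$ constructed on the different boundary pieces must be matched in the overlaps; this is where the main obstacle lies, because the neighborhoods of adjacent edges $E,E'$ belong to different handles of $\widetilde{\bfW}(\Phi)$ and the Liouville form is only standard after the modifications from \cite[Proposition 2.9]{Eli} used during each handle attachment. I would resolve this by working one stratum at a time, starting with $P$ and moving outward along the filtration \pref{eq:filtration1}: on each stratum $S$ encountered by $\del^2 N_P$ the local model is $T^*\widehat{M}_S\times T^*S_\circ$, so that $I$ can be chosen purely in the base factor $T^*S_\circ\to S_\circ$ and then extended by a cutoff in the cotangent-fiber direction using a partition of unity subordinate to the handle decomposition. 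Compatibility across adjacent strata then reduces to the matching of coordinates $\eta_\sigma$ guaranteed by \pref{lem:coordinates}(2),(4), after which $I$ is globally defined and still satisfies \pref{eq:sectorial} at its single boundary. The Weinstein property is inherited from that of $\widetilde{\bfW}(\Phi)$ itself: since the boundary hypersurface is transverse to the Liouville field away from a controlled cylindrical region and tangent to it there, a Morse function adapted to the splitting $F_P\times\bR$ is obtained by pairing the Morse function of the symplectic reduction with the linear coordinate $I$, and $Z$ remains gradient-like for it after a compactly supported perturbation. This verifies all clauses of \pref{dfn:L-sector} together with the Weinstein condition, proving the lemma.
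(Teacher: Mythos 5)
The overall strategy — locating the boundary over the perturbed edges, reducing to the product models near each stratum coming from the handle attachment $T^*\widehat{M}_S\times T^*S_\circ$, and reading off an $\alpha$-defining function — matches the paper's proof in spirit. However, the specific defining function you propose does not work, and this is a genuine gap.

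You set $I=t\circ\bar{\pi}$ where $t$ is a coordinate perpendicular to the boundary edge $E$. By the construction of $\bar{\pi}$, the map restricted to $T^*S_\circ$ is the \emph{base} projection, so $t\circ\bar{\pi}$ is a base coordinate $q_\perp$ on $T^*S_\circ$. Two things go wrong. First, $q_\perp$ vanishes identically on $\del^2\widetilde{\bfW}(P)$ (since $E=\{t=0\}$), so as a function on the boundary hypersurface it is zero, and in particular $dI|_{C}=0$ rather than $dI|_C>0$: the characteristic foliation of $T^*S_\circ|_{\{q_\perp=0\}}$ is spanned by the conjugate momentum direction $\del_{p_\perp}$, and $dq_\perp(\del_{p_\perp})=0$. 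Second, the Liouville vector field in that chart is the cotangent scaling $p\,\del_p$, so $Zq_\perp=0$, not $\alpha q_\perp$ for $\alpha>0$; your claim that $I$ ``is linear along the Liouville flow \ldots after a small rescaling'' cannot be repaired. What is needed (and what the paper uses, under the name ``the function given by the cotangent scaling of $T^*S^{(k)}_\circ$'') is the \emph{conjugate} momentum $p_\perp$, which satisfies $Zp_\perp=p_\perp$ and $dp_\perp|_C>0$; equivalently, the function whose Hamiltonian vector field is outward along the boundary.

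Once that correction is made, the rest of your outline (reducing to the product handle charts, using the perpendicularity, gluing the locally defined functions across strata with the compatibility from \pref{lem:coordinates}, and then citing the Weinstein property of the ambient $\widetilde{\bfW}(\Phi)$) is consonant with the paper's ``rewind the handle attachments from the top strata inward'' argument. One minor structural difference: the paper explicitly shows $\del^2\widetilde{\bfW}(P)$ is obtained by extending, through the handles, the preimages of hypersurfaces in the top-dimensional strata, and glues the cotangent-scaling functions along that extension; you instead proceed outward from $P$ and invoke a partition of unity. Either order can be made to work, but the partition-of-unity step is unnecessary if you choose functions that already agree on overlaps via \pref{lem:coordinates}(2),(4), as the paper does.
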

\begin{proof}
We adapt the idea in
\cite[Example 1.33]{GPS2}.
Recall that
we are assuming
$\Phi$
to be closed.
For simplicity,
we will further assume that
there is no lower dimensional strata
which are not adjacent to higher dimensional strata.
Since by construction
$\bar{\pi}(\del^2 \widetilde{\bfW}(P))$
is away from
$P$,
we may assume that
$\del^2 \widetilde{\bfW}(P)$
is disjoint from
$T^* \widehat{M}_P
\subset
\widetilde{\bfW}(P)
\cap
\widetilde{\bfW}(\Phi_0)$.
Let
$I$
be a $1$-stratum
which intersects
$\bar{\pi}(\del^2 \widetilde{\bfW}(P))$
at a point
$p$.
Due to the above perturbation,
locally in
$\Phi_1$
the image
$\bar{\pi}(\del^2 \widetilde{\bfW}(P))$
becomes perpendicular to
$I$.
Since
$\bar{\pi}$
kills
the base direction of
$T^* \widehat{M}_I$
and
the fiber direction of
$T^* I_\circ$,
over the neighborhood of
$I$
the boundary
$\del^2 \widetilde{\bfW}(P)$
is isomorphic to a product of
$T^* \widehat{M}_I$
and
the fiber
$T^*_p I_\circ$.
Inductively,
let
$S^{(k)}$
be a $k$-stratum adjacent to a $(k-1)$-stratum
$S^{(k-1)}$
which intersects
$\bar{\pi}(\del^2 \widetilde{\bfW}(P))$.
Due to the above perturbation,
locally in
$\Phi_k$
the image
$\bar{\pi}(\del^2 \widetilde{\bfW}(P))$
becomes perpendicular to
$S^{(k)}$.
Since
$\bar{\pi}$
kills
the base direction of
$T^* \widehat{M}_{S^{(k)}}$
and
the fiber direction of
$T^* S^{(k)}_\circ$,
over the neighborhood of
$S^{(k)}$
the boundary
$\del^2 \widetilde{\bfW}(P)$
is isomorphic to a product of
$T^* \widehat{M}_{S^{(k)}}$
and
the restriction
$T^* S^{(k)}_\circ |_{\bar{\pi}(\del^2 \widetilde{\bfW}(P)) \cap S^{(k)}}$.
By construction locally in
$\Phi_{(k-1)}$
the intersection
$\bar{\pi}(\del^2 \widetilde{\bfW}(P)) \cap S^{(k)}$
becomes perpendicular to
$S^{(k-1)}$,
the handle attachment
$\bfW(\Phi_{k-1}) \#_{\cL_{k-1}} (T^* \widehat{M}_{S^{(k)}} \times T^* S^{(k)}_\circ)$
glues this product to the gluing of the products in the previous steps.

Rewinding the process,
one sees that
$\del^2 \widetilde{\bfW}(P)$
is obtained by extending via the handle attachments the union of the inverse images under
$\bar{\pi}$
of hypersurfaces in the intersection of
$\Phi_n \setminus \Phi_{n-1}$
and
the connected component containing
$P$.
We claim that
this hypersurface
$\del^2 \widetilde{\bfW}(P)$
is sectorial.
Since the newly attached pieces are cylindrical,
so is
$\del^2 \widetilde{\bfW}(P)$.
It remains to show the existence of an $\alpha$-defining function on
$\del^2 \widetilde{\bfW}(P)$.  
By
\cite[Construction 12.17]{GPS2}
we may assume
$\del^2 \widetilde{\bfW}(P)$
to be one of the hypersurfaces-with-corners dividing
$\widetilde{\bfW}(\Phi)$
into the original connected components.
Let
$I$
be a $1$-stratum adjacent to
$P$
and
$H_{P, I}$
any of such hypersurfaces intersecting
$I$.
Fix the standard coordinates
$(x, y)$
on the symplectic manifold
$T^* I_\circ$.
Unwinding
the filtration of
$\Phi$
and
the construction of
$\widetilde{\bfW}(\Phi)$,
one sees that
$H_{P, I}$
is given by
$\{ x = 0 \}$
up to perturbation.
We denote by
$y_{P, I}$
the function on a neighborhood of
$H_{P, I}$
which is canonically induced by
$y \colon T^* I_\circ \to \bR$.
Clearly,
$y_{P, I}$
is linear near infinity.
The Hamiltonian vector field
$X_{y_{P, I}}$
is outward pointing along
$H_{P, I}$,
as it is given by a lift of
$\del_x$
on 
$T^* I_\circ$.
Hence
$H_{P, I}$
is sectorial.
Thus the Weinstein manifold-with-boundary
$\widetilde{\bfW}(P)$
satisfies the condition in
\cite[Definition 2.4]{GPS1}
to be a Liouville sector.
\end{proof}

\begin{remark}
Suppose that
there is a lower dimensional closed stratum
which is not adjacent to higher dimensional strata.
Let
$S^{(k)}$
be such a stratum.
Then for each $0$-stratum
$P$
adjacent to it,
$\del^2 \widetilde{\bfW}(P)$
gets contribution from the extensions via the handle attachments of the inverse images under
$\bar{\pi}$
of hypersurfaces in
$S^{(k)}$.
\end{remark}

\begin{remark}
Suppose that
$\Phi$
is not closed
and,
for instance,
there is a half-open $1$-stratum
$I$
adjacent to
$P$.
Recall that
in this case the handle attachment
$\bfW(\Phi_1) \#_{\cL_1} (T^* \widehat{M}_I \times T^* I_\circ)$
is not precisely a Liouville manifold-with-boundary,
as one needs to modify the canonical Liouville form of
$T^* I_\circ$
near
$\del T^* I_\circ$
making it convex.
This modification,
which is not necessary if
$I$
were closed,
yields a stop on the newly formed ideal boundary of the handle attachment.
\end{remark}

\begin{lemma} \label{lem:W-SC}
The union
$\bigcup_{P} \widetilde{\bfW}(P)$
gives a Weinstein sectorial cover of the modification
$\widetilde{\bfW}(\Phi)$.
\end{lemma}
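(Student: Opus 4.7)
The plan is to verify the three conditions of a Weinstein sectorial cover from \cite[Definition 12.15, Definition 12.19]{GPS2}: that the pieces $\widetilde{\bfW}(P)$ cover $\widetilde{\bfW}(\Phi)$ as Liouville sectors-with-sectorial-corners, that the collection of boundaries $\del\widetilde{\bfW}(\Phi),\del^2\widetilde{\bfW}(P_1),\ldots,\del^2\widetilde{\bfW}(P_n)$ is sectorial, and that the convex completions of the symplectic reductions of all strata $X_{I,J,K}$ are Weinstein up to deformation. The covering property and sector-with-corners structure are essentially immediate: the partition of $\Phi$ around barycenters exhausts $\Phi$, so the preimages under $\bar{\pi}$ cover $\widetilde{\bfW}(\Phi)$, and \pref{lem:W-sector} together with the transversality arranged by perturbation shows that each $\widetilde{\bfW}(P)$ is a Liouville sector whose two faces $\del^1\widetilde{\bfW}(P)=\widetilde{\bfW}(P)\cap\del\widetilde{\bfW}(\Phi)$ and $\del^2\widetilde{\bfW}(P)$ meet along a genuine corner locus.

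The main content is the construction of defining functions $I_P\colon\Nbd^Z_{\widetilde{\bfW}(\Phi)}(\del^2\widetilde{\bfW}(P))\to\bR$ satisfying the Poisson conditions. Exploiting the product structure established in the proof of \pref{lem:W-sector}, on each handle $T^*\widehat{M}_{S^{(k)}}\times T^*S^{(k)}_\circ$ the hypersurface $\del^2\widetilde{\bfW}(P)$ factors as $T^*\widehat{M}_{S^{(k)}}\times H^P_{S^{(k)}}$ for a cylindrical hypersurface $H^P_{S^{(k)}}\subset T^*S^{(k)}_\circ$ coming from the perpendicular partition edge separating the component containing $P$ from its neighbour. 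I would choose $I_P$ to be pulled back from a linear-at-infinity defining function of $H^P_{S^{(k)}}$ on the $T^*S^{(k)}_\circ$ factor and glue these across handle attachments using the compatibility of $\bar{\pi}$ built into its construction. On an overlap $\del^2\widetilde{\bfW}(P)\cap\del^2\widetilde{\bfW}(P')$, which lies over the intersection of two perpendicular edges on a common stratum $S^{(k)}$, both $I_P$ and $I_{P'}$ depend only on the $T^*S^{(k)}_\circ$ factor; after a small sectorial isotopy of the edges into transverse affine-linear position, they become the two coordinate functions $p_1,p_2$ in Darboux coordinates, so $\{I_P,I_{P'}\}=0$ and the characteristic foliations are $\omega$-orthogonal. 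The same argument with $\del\widetilde{\bfW}(\Phi)$ in place of one of the $\del^2\widetilde{\bfW}(P)$ handles the mixed brackets with the outer boundary.

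For the Weinstein condition, each stratum $X_{I,J,K}$ meets a handle $T^*\widehat{M}_{S^{(k)}}\times T^*S^{(k)}_\circ$ as $T^*\widehat{M}_{S^{(k)}}\times U^{S^{(k)}}_{I,J,K}$ where $U^{S^{(k)}}_{I,J,K}$ is a polyhedral piece of $T^*S^{(k)}_\circ$ cut out by the perpendicular edges; the symplectic reduction is accordingly a product of the Weinstein manifold $T^*\widehat{M}_{S^{(k)}}$ with the reduction of $U^{S^{(k)}}_{I,J,K}$, the latter being a cotangent bundle over a piece of $S^{(k)}_\circ$ and hence Weinstein after convex completion. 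Gluing these reductions across handle attachments then reproduces the required Weinstein structure up to deformation. The main obstacle I anticipate is the global coherence of the Poisson conditions: ensuring that the locally chosen $I_P$ glue across handles and that the characteristic foliations are globally $\omega$-orthogonal rather than merely stratum-by-stratum. I expect to handle this by choosing the perturbed partition of $\Phi$ so that the perpendicular edges meet in affine-linear general position on each stratum and then straightening them to coordinate hyperplanes via an ambient isotopy that lifts through $\bar{\pi}$ to a sectorial-preserving Liouville isotopy of $\widetilde{\bfW}(\Phi)$, reducing the verification to the toy case of coordinate hyperplanes in a vector space.
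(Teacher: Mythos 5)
Your proof shares the same overall strategy as the paper (verify coverage, sectoriality, and the Weinstein condition via the product structure on the handles), but you miss the single observation that does most of the work in the paper's argument, and you therefore anticipate a global-coherence obstacle that the paper avoids entirely.

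The paper's key point is that the enlargement and perpendicular perturbation of the barycentric dual partition arrange that $\bar{\pi}(\del^2 \widetilde{\bfW}(P))$ and $\bar{\pi}(\del^2 \widetilde{\bfW}(P'))$ intersect \emph{only away from lower dimensional strata}, i.e.\ only in the interior of top-dimensional strata of $\Phi$. Consequently, the overlap $\del^2 \widetilde{\bfW}(P) \cap \del^2 \widetilde{\bfW}(P')$ lies entirely inside a single handle $T^* S^{(d)}_\circ$ over a top-dimensional stratum $S^{(d)}$, where it is just the intersection of restricted cotangent bundles over hypersurfaces in $S^{(d)}_\circ$; the Poisson conditions and $\omega$-orthogonality then follow immediately from the model case of \cite[Example 1.34]{GPS2}. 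Your attempt instead tries to verify the Poisson conditions over an arbitrary stratum $S^{(k)}$ (conflating the perturbed one-dimensional edges with the codimension-one walls in the process), and you correctly identify that this creates a gluing-coherence problem across handle attachments, which you propose to fix with an ad hoc ambient isotopy. That isotopy argument is not needed once you note that the perturbation was designed to push overlaps into the top strata; the paper's argument is both shorter and avoids the speculative step. A smaller discrepancy: because the partition walls lie in the interior of $\Phi$, the paper remarks that each $\widetilde{\bfW}(P)$ has \emph{no} corners and $\del^2 \widetilde{\bfW}(P)$ is disjoint from $\del \widetilde{\bfW}(\Phi)$, so the relevant clause of \cite[Definition 12.19]{GPS2} is the corner-free variant, not the one with two faces meeting along a corner locus as you assert. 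Your treatment of the Weinstein condition is consistent with the paper's, just phrased handle-by-handle rather than globally via $\bar{\pi}$.
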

\begin{proof}
We adapt the idea in
\cite[Example 1.34]{GPS2}.
Again,
assume that
$\Phi$
is closed
and
there is no lower dimensional strata
which are not adjacent to higher dimensional strata.
Finiteness of the cover
$\widetilde{\bfW}(\Phi)
=
\bigcup_{P} \widetilde{\bfW}(P)$
follows from the condition
$(i)$.
By construction each
$\widetilde{\bfW}(P)$
has no corners
and
$\del \widetilde{\bfW}(\Phi)$
is empty.
Moreover,
$\del^2 \widetilde{\bfW}(P), \del^2 \widetilde{\bfW}(P^\prime)$
intersect transversely for any distinct pair
$P, P^\prime$
of $0$-strata
unless they are disjoint.
By
\pref{lem:W-sector}
each
$\widetilde{\bfW}(P)$
is a Weinstein sector.
Hence the collection of the hypersurfaces 
$\del^2 \widetilde{\bfW}(P)$
is sectorial
if there exist functions
\begin{align*}
\tilde{y}_{P}
\colon
\Nbd^Z_{\widetilde{\bfW}(\Phi)}(\del^2 \widetilde{\bfW}(P))
\to
\bR
\end{align*}
linear near infinity satisfying
\begin{align*}
d \tilde{y}_{P} |_{C_{P^\prime}} = 0 \ \text{for} \ P \neq P^\prime, \
\{ \tilde{y}_{P}, \tilde{y}_{P^\prime} \} = 0
\end{align*}
where
$C_P$
denote the characteristic foliations of
$\del^2 \widetilde{\bfW}(P)$.

Consider the functions
$y_{P, I}$
in the proof of
\pref{lem:W-sector}.
Pushing forward
$X_{y_{P, I}}$
by
$\bar{\pi}$,
we obtain a collection of vector fields on neighborhoods of
$\bar{\pi}(H_{P, I})$.
Here,
by a vector field on a subset of
$\Phi$
we mean the gluing of vector fields on submanifolds of
$\Phi_k \setminus \Phi_{k-1}$
for
$k = 1, \ldots, n$.
Take their deformations around the corners to concatenate
$\bar{\pi}_*(X_{y_{P, I}})$
so that
translation of the result gives a vector field
$\tilde{X}_P$
on a neighborhood
$\bar{\pi}(\del^2 \widetilde{\bfW}(P))$,
which is outward pointing along
$\del^2 \widetilde{\bfW}(P)$.
Recall that
$\bar{\pi}(\del^2 \widetilde{\bfW}(P)),
\bar{\pi}(\del^2 \widetilde{\bfW}(P^\prime))$
intersect transversely for any distinct pair
$P, P^\prime$
of $0$-strata
unless they are disjoint.
Since
$\Phi$
has only finitely many $0$-strata,
we may deform further
$\tilde{X}_P$
simultaneously
so that
they in addition become tangent to
$\del^2 \widetilde{\bfW}(P^\prime)$
on the intersections.
We use the same symbol
$\tilde{X}_P$
to denote these deformations.
We define
$\tilde{y}_{P}$
as the Hamiltonian lifts of
$\tilde{X}_P$,
which make sense due to the construction of
$\widetilde{\bfW}(\Phi)$
and
our definition of a vector field on a subset of
$\Phi$.

It remains to show that
the sectorial cover is Weinstein.
All strata
\begin{align*}
\widetilde{\bfW}(\Phi)_{J, K, L}
=
\bigcup_{P \in J} \widetilde{\bfW}(P)
\cap
\bigcap_{P^\prime \in K} \del^2 \widetilde{\bfW}(P^\prime)
\setminus
\bigcap_{P^{\prime \prime} \in L} \widetilde{\bfW}(P^{\prime \prime})
\end{align*}
are the inverse images under
$\bar{\pi}$
of
\begin{align*}
\bar{\pi}(\widetilde{\bfW}(\Phi))_{J, K, L}
=
\bigcup_{P \in J} \bar{\pi}(\widetilde{\bfW}(P))
\cap
\bigcap_{P^\prime \in K} \bar{\pi}(\del^2 \widetilde{\bfW}(P^\prime))
\setminus
\bigcap_{P^{\prime \prime} \in L} \bar{\pi}(\widetilde{\bfW}(P^{\prime \prime}))
\end{align*}
Recall that
locally in
$\Phi_k$
the images
$\bar{\pi}(\del^2 \widetilde{\bfW}(P))$
are perpendicular to
$S^{(k)}$.
Hence over
$\Phi_k \setminus \Phi_{k-1}$
the convex completions of the symplectic reductions of
$\widetilde{\bfW}(\Phi)_{J, K, L}$
are given by the union of products of
$T^* \widehat{M}_{S^{(k)}}$
and
the restrictions
$T^* S^{(k)}_\circ |_{\bar{\pi}(\widetilde{\bfW}(\Phi))_{J, K, L} \cap S^{(k)}}$.
In particular,
over
$\Phi_n \setminus \Phi_{n-1}$
the convex completions of the symplectic reductions of
$\widetilde{\bfW}(\Phi)_{J, K, L}$
are simply given by the cotangent bundles of
$\bar{\pi}(\widetilde{\bfW}(\Phi))_{J, K, L}$.
These unions for
$k = 1, \ldots, n$
are
Weinstein up to deformation
and
glued to yield the convex completion of the symplectic reduction of
$\widetilde{\bfW}(\Phi)_{J, K, L}$.
\end{proof}

\begin{remark}
The construction does not work without modification of
$\widetilde{\bfW}(\Phi)$,
since in general
$\del \widetilde{\bfW}(\Phi)$
is nonempty
and
intersects
$\del^2 \widetilde{\bfW}(P)$.
\end{remark}

\subsection{Sectorial descent and HMS over fanifolds}
Our Weinstein sectorial cover is compatible with HMS established by Gammage--Shende in
\cite{GS1}.
On the $A$-side,
there are three isomorphic constructible cosheaves of categories over
$\Phi$.
First,
consider the functor
\begin{align*}
\fsh_*
\colon
(\Fan^\twoheadrightarrow)^{op}
\to
^{**} \DG, \
\Sigma
\mapsto
\Sh_{\bL(\Sigma)}(T_\Sigma)
\end{align*}
which is obtained from
$\fsh
\colon
\Fan^\twoheadrightarrow
\to
^{*} \DG^*$
in
\cite[Section 4.5]{GS1}
by taking adjoints of the images of the morphisms.
Here,
$^{**} \DG$
denotes the category of
cocomplete dg categories
and
functors
which preserves
colimits
and
compact objects,
while
$^{*} \DG^*$
denotes the category of
cocomplete dg categories
and
functors
which preserves 
limits
and
colimits.
Note that
we have
$^{*} \DG^* = (^{**} \DG)^{op}$.
We use the same symbol
$\fsh_*$
to denote the composition
$\Exit(\Phi)^{op}
\to
^{**} \DG$
with the functor
$\Exit(\Phi)^{op}
\to
(\Fan^\twoheadrightarrow)^{op}$
equipped with
$\Phi$.

Second,
consider the functor
\begin{align*}
\pi_* \msh_{\widetilde{\bL}(\Phi) *}
\colon
\Exit(\Phi)^{op}
\to
^{**} \DG
\end{align*}
which is obtained from
$\pi_* \msh_{\widetilde{\bL}(\Phi)}
\colon
\Exit(\Phi)
\to
^{*} \DG^*$
in
\cite[Section 4.5]{GS1}
by taking adjoints of the images of the morphisms.
By definition both
$\fsh_*$
and
$\pi_* \msh_{\widetilde{\bL}(\Phi) *}$
are constructible cosheaves of categories over
$\Phi$.
From
\cite[Proposition 4.34]{GS1}
we obtain an isomorphism
\begin{align*}
\fsh_*
\cong
\pi_* \msh_{\widetilde{\bL}(\Phi) *}.
\end{align*}

Third,
let
$\UI(\Phi)$
be the category of unions of intersections of
$\widetilde{\bfW}(P)$
in the Weinstein sectorial cover
$\widetilde{\bfW}(\Phi)
=
\bigcup_P \widetilde{\bfW}(P)$
from
\pref{lem:W-SC}.
Morphisms are given by inclusions of Weinstein sectors.
Introduce to
$\UI(\Phi)$
a topology generated by maximal proper Weinstein subsectors
$\widetilde{\bfW}(P)$
of
$\widetilde{\bfW}(\Phi)$
in the cover.
Consider the functor
$\Fuk_*
\colon
\UI(\Phi)
\to
^{**} \DG$
which sends
each Weinstein sector to the category of modules over the wrapped Fukaya category.
The images of morphisms are induced by the pushforward functors from
\cite{GPS1}
along the inclusions of Weinstein sectors.
Due to
the covariant functoriality
and
sectorial descent
of wrapped Fukaya categories,
$\Fuk_*$
gives a cosheaf of categories over
$\UI(\Phi)$.
Note that
$\Fuk_*$
preserves compact objects
as it is defined before taking module categories.
Since
$\bar{\pi}$
restricts to
$\pi$
and
$\widetilde{\bfW}(\Phi)
=
\bigcup_P \widetilde{\bfW}(P)$
is a lift of an open cover of
$\widetilde{\bL}(\Phi)$,
the pushforward
\begin{align*}
\bar{\pi}_* \Fuk_*
\colon
\Exit(\Phi)^{op}
\to
^{**} \DG
\end{align*}
gives a constructible cosheaf of categories over
$\Phi$.

\begin{lemma} \label{lem:topological}
There is an isomorphism
$\pi_* \msh^{op}_{\widetilde{\bL}(\Phi) *}
\cong
\bar{\pi}_* \Fuk_*$.
\end{lemma}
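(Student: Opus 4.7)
The plan is to verify the isomorphism stratum-by-stratum, leveraging the fact that both sides are constructible cosheaves over $\Phi$. Since $\bar{\pi}_* \Fuk_*$ is manifestly constructible with respect to the stratification of $\Phi$ (by our construction in Section 4.2, the local pieces $\widetilde{\bfW}(P)$ are pulled back from a partition refining the stratification), it suffices by the cosheaf property to compare costalks at each stratum $S \subset \Phi$.

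First I would identify the costalk of $\bar{\pi}_* \Fuk_*$ at a stratum $S$. By construction, a small neighborhood of $S$ in $\Phi$ is covered by the images $\bar{\pi}(\widetilde{\bfW}(P))$ for $0$-strata $P$ in the closure of $S$, and the relevant intersections of $\widetilde{\bfW}(P)$ over $\bar{\pi}^{-1}(\Nbd(S))$ are, by \pref{lem:W-sector} together with the inductive construction of Section 3.3, equivalent (up to trivial inclusion of Weinstein sectors) to open subsectors of the local model $T^*\widehat{M}_S \times T^* S_\circ$ stopped by $\bL(\Sigma_S) \times S$. Sectorial descent (Theorem 1.35 of GPS2, applied to the Weinstein sectorial cover of \pref{lem:W-SC} restricted over $\Nbd(S)$) then identifies the costalk with modules over the partially wrapped Fukaya category of this local model.

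Next I would invoke the Ganatra--Pardon--Shende equivalence between partially wrapped Fukaya categories and microsheaves on the relative skeleton, which yields
\begin{align*}
\Mod\cW(T^*\widehat{M}_S \times T^* S_\circ, \bL(\Sigma_S) \times S)
\simeq
\msh^{op}_{\bL(\Sigma_S) \times S}(\bL(\Sigma_S) \times S),
\end{align*}
and observe that the right-hand side is precisely the costalk of $\pi_* \msh^{op}_{\widetilde{\bL}(\Phi) *}$ at $S$, since $\pi^{-1}(\Nbd(S)) \cap \widetilde{\bL}(\Phi)$ is modelled by $\bL(\Sigma_S) \times S$ by \pref{lem:coordinates} and the construction of $\widetilde{\bL}(\Phi)$.

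The main obstacle is not the costalk identification but the verification of functoriality with respect to exit paths in $\Exit(\Phi)$: I need to check that the pushforward functors of $\bar{\pi}_* \Fuk_*$ along an exit path $S \to S'$, which by the GPS inclusion formalism are induced by the inclusions of Weinstein subsectors corresponding to deeper strata into shallower ones, agree under the equivalence with the adjoints of the microlocalization restriction functors that define $\pi_* \msh^{op}_{\widetilde{\bL}(\Phi) *}$. This boils down to matching the Weinstein handle-attachment geometry of Section 3.2 with the recursive identification \pref{eq:recursive} of FLTZ pieces across strata; the covariant/contravariant bookkeeping (taking the opposite categories and adjoints, which is why $\msh^{op}$ appears) should be handled exactly as in the passage from $\fsh$ to $\fsh_*$ described earlier, so that the desired isomorphism is obtained as a natural transformation between the two cosheaves which is a costalkwise equivalence.
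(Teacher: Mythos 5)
Your proposal is correct and takes essentially the same approach as the paper: the paper also reduces to a local comparison over a neighborhood of each intersection of the $\bar{\pi}(\widetilde{\bfW}(P))$, observes that the inverse image under $\bar{\pi}$ is a stopped Weinstein sector whose relative skeleton is the corresponding inverse image under $\pi$, and invokes \cite[Theorem 1.4]{GPS3} to identify the sections, checking in the end that the corestriction functors on the microsheaf side and the pushforward functors on the Fukaya side intertwine the resulting equivalences. The only difference is cosmetic: you phrase the local comparison in terms of costalks at strata and exit paths, whereas the paper phrases it as sections over neighborhoods of cover intersections and corestriction maps; for constructible cosheaves over a stratified space these are equivalent ways of encoding the same data.
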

\begin{proof}
Over a sufficiently small open neighborhood of each intersection of
$\bar{\pi}(\widetilde{\bfW}(P))$,
the inverse image under
$\bar{\pi}$,
regarded as a stopped Weinstein sector as in Remark
\pref{rmk:2.3},
has the inverse image under
$\pi$
as its relative skeleton.
Hence by
\cite[Theorem 1.4]{GPS3}
the sections over such open neighborhoods are canonically equivalent.
Moreover,
the corestriction functors between sections of the left
and
the pushforward functors between sections of the right
intertwine these canonical equivalences. 
\end{proof}

On the $B$-side,
the composition
\begin{align*}
\Coh_! \circ \bfT
\colon
\Exit(\Phi)^{op}
\to
^{**} \DG, \
S^{(k)}
\mapsto
\Coh(\bfT(S^{(k)}))
\end{align*}
of
$\Coh_!$
with
$\bfT$
from
\cite[Section 3]{GS1}
gives a constructible cosheaves of categories over
$\Phi$.
Recall that
$\bfT(S^{(k)})$
is the toric variety for the fan
$\Sigma_{S^{(k)}} \subset M_{S^{(k)}, \bR}$,
where
$(\Sigma_{S^{(k)}}, M_{S^{(k)}})$
is the image of
$S^{(k)}$
under the functor
$\Exit(\Phi)^{op}
\to
(\Fan^\twoheadrightarrow)^{op}$
equipped with
$\Phi$.
Recall also that
the values of
$\Coh_! \circ \bfT$
are the module categories over the dg categories of coherent sheaves on toric varieties
and
the images of morphisms are the pushforwards along closed immersions.
By
\cite[Proposition 3.10]{GS1} the colimit
$\bfT(\Phi)
=
\colim_{\Exit(\Phi)^{op}} \bfT(S^{(k)})$
exists as an algebraic space.
Moreover,
the canonical functor
\begin{align*}
\Coh_! \circ \bfT(\Phi)
=
\colim_{\Exit(\Phi)^{op}} \Coh \bfT(S^{(k)})
\to
\Coh \left( \colim_{\Exit(\Phi)^{op}} \bfT(S^{(k)}) \right)
=
\Coh(\bfT(\Phi))
\end{align*}
is an equivalence in
\cite[Proposition 3.14]{GS1}.

\begin{theorem}[cf. {\cite[Corollary 5.8]{GS1}}] \label{thm:HMS}
There is an isomorphism
$\bar{\pi}_* \Fuk_*
\cong
\Coh_! \circ \bfT$.
\end{theorem}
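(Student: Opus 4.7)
The plan is to deduce the theorem directly by concatenating the two immediately preceding lemmas. Both \pref{lem:topological} and \pref{lem:5.3} exhibit the cosheaf $\pi_* \msh^{op}_{\widetilde{\bL}(\Phi) *}$ as a common intermediary: the former identifies it with the $A$-side $\bar{\pi}_* \Fuk_*$, while the latter identifies it with the $B$-side $\Coh_! \circ \bfT$. Composing these two canonical isomorphisms of constructible cosheaves of categories over $\Phi$ produces the desired isomorphism $\bar{\pi}_* \Fuk_* \cong \Coh_! \circ \bfT$.

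The substantive content has already been carried out earlier in the paper. On the $A$-side, the input to \pref{lem:topological} is the compatibility between the Weinstein sectorial cover $\widetilde{\bfW}(\Phi) = \bigcup_P \widetilde{\bfW}(P)$ constructed in \pref{lem:W-SC} and the induced open cover of the relative skeleton $\widetilde{\bL}(\Phi)$ obtained as $\pi^{-1}$ of the partition of $\Phi$. Over any sufficiently small open $U \subset \Phi$, the stopped Weinstein sector $\bar{\pi}^{-1}(U)$ has $\pi^{-1}(U)$ as its relative skeleton, so [GPS3, Theorem 1.4] identifies its partially wrapped Fukaya category with sections of microsheaves on this skeleton; sectorial descent from [GPS2] promotes this stalkwise identification to an isomorphism of cosheaves. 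On the $B$-side, \pref{lem:5.3} is the local coherent--constructible correspondence applied stratum by stratum to each toric piece $\bfT(S^{(k)})$ and assembled along the exit-path structure, as in [GS1, Theorem 5.3].

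Thus the proof is a short formal composition and the only thing to verify is that the two canonical isomorphisms respect the corestriction maps on both sides, which is implicit in the statements of \pref{lem:topological} and \pref{lem:5.3} since those assert isomorphisms of cosheaves rather than merely of stalks. If any step warrants extra care it is this naturality check, but it reduces to the fact that both sides of each lemma are built out of pushforward or corestriction functors that are intertwined by the canonical equivalences of [GPS3, Theorem 1.4] and [GS1, Theorem 5.3]. The global-section equivalence $\Fuk(\widetilde{\bfW}(\Phi)) \simeq \Coh(\bfT(\Phi))$ advertised in the introduction then follows by taking colimits over $\Exit(\Phi)^{op}$, combining sectorial descent on the symplectic side with the existence of the algebraic colimit $\bfT(\Phi)$ guaranteed by \pref{lem:3.10}.
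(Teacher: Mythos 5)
Your proposal matches the paper's proof exactly: the paper proves \pref{thm:HMS} by concatenating \pref{lem:topological} and \pref{lem:5.3}, with $\pi_* \msh^{op}_{\widetilde{\bL}(\Phi) *}$ as the common intermediary, which is precisely what you do. The additional commentary on the inputs to each lemma and the naturality of the corestriction maps is consistent with the surrounding material and does not change the argument.
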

\begin{proof}
From
\cite[Theorem 5.3]{GS1}
we obtain an isomorphism
$\pi_* \msh^{op}_{\widetilde{\bL}(\Phi) *}
\cong
\Coh_! \circ \bfT$.
Combined with
\pref{lem:topological},
it yields the desired isomorphism.
\end{proof}

Taking the global section,
we recover the equivalence
\cite[Theorem 5.4]{GS1}
\begin{align} \label{eq:HMS}
\Fuk(\widetilde{\bfW}(\Phi))
\simeq
\Coh(\bfT(\Phi)).
\end{align}

\section{HMS for very affine hypersurfaces}
In this section,
we revisit HMS for very affine hypersurfaces established by Gammage--Shende.
Specifically,
we describe an aspect of the mirror pair as the associated spaces with a fanifold.
Then we apply our results in the previous section,
giving an affirmative answer to the conjecture raised in
\cite[Section 2.5]{GS2}.
As a consequence,
we promote HMS for very affine hypersurfaces to an isomorphism of constructible cosheaves of categories over the fanifold. 

\subsection{Mirror pair for very affine hypersurfaces}
Let
$H
\subset
M^\vee_\bC
\cong
T^* T^{d+1}
\cong
(\bC^*)^{d+1}$
be a very affine hypersurface. 
Assume that,
via the standard correspondence,
its defining Laurent polynomial determines a smooth quasiprojective stacky fan
$\Sigma \subset M_\bR \cong \bR^{d+1}$.
Assume further that
its stacky primitives span a convex lattice polytope
$\Delta^\vee$
containing the origin
and
$\Sigma$
defines an adapted star-shaped triangulation
$\cT$
of
$\Delta^\vee$.
Let
$\bfT_\Sigma$
be the toric stack for
$\Sigma$.
By
\cite[Theorem 1.1]{GS2}
for its toric boundary divisor
$\del \bfT_\Sigma$
there is an equivalence
\begin{align} \label{eq:GS}
\Fuk(H) \simeq \Coh(\del \bfT_\Sigma).
\end{align}

Below,
we briefly review the proof.
Due to the assumptions,
one can construct a global skeleton
$\Core(H)$
of
$H$.
Recall that
we restrict to
$H$
the canonical Weinstein structure on
$M^\vee_\bC \cong T^* T^{d+1}$.  
In the sequel,
for simplicity we will assume that
$\Sigma$
is an ordinary simplicial fan.
When 
$\Sigma$
is a smooth quasiprojective stacky fan,
we will replace
the $A$-model with its finite cover
and
the $B$-model with the associated toric stack.
The skeleton
$\Core(H)$
can be obtained by gluing skeleta of tailored pants
$\tilde{P}_d$
computed by Nadler
\cite{Nad},
after transported to the ones in the pants decomposition of
$H$.
As explained in
\cite{Nad},
we may identify the skeleton
$\Core(\tilde{P}_d)$
with the ideal boundary of the FLTZ skeleton associated with the toric variety
$\bA^{d+1}$.
Hence we may identify
$\Core(H)$
with
$\del_\infty \bL(\Sigma)$
for the FLTZ skeleton
$\bL(\Sigma)$
associated with
$\bfT_\Sigma$.
Moreover,
by
\cite[Corollary 4.8]{GS2}
the skeleton
$\Core(H) = \del_\infty \bL(\Sigma)$
has an open cover by the subsets
$\bL(\Sigma / \sigma) \times \del_\infty \sigma$
anti-indexed by the poset of nonzero cones in
$\Sigma$.

To each
$\sigma \in \Sigma$,
let
$\overline{O(\sigma)}$
be the closure of the corresponding toric orbit
$O(\sigma)$.
It is the toric variety for
$\Sigma / \sigma$.
Here,
we take its reduced induced scheme structure.
If
$\sigma \subset \tau$,
then there is a closed immersion
$\overline{O(\tau)}
\hookrightarrow
\overline{O(\sigma)}$
induced by the quotient
$\Sigma / \sigma
\to
\Sigma / \tau$.
Also,
we have
$\overline{O(\sigma)}
\cap
\overline{O(\sigma^\prime)}
=
\overline{O(\sigma \wedge \sigma^\prime)}$
where
$\sigma \wedge \sigma^\prime$
denotes the smallest cone in
$\Sigma$
containing both
$\sigma$
and
$\sigma^\prime$
if such a cone exists,
otherwise
$\emptyset$.
By definition
$\del \bfT_\Sigma$
is the union of the nontrivial toric orbit closures.
Scheme-theoretically,
this amounts to the existence of the colimit
\begin{align} \label{eq:5.3}
\del \bfT_\Sigma
=
\colim_{0 \neq \sigma \in \Sigma}\overline{O(\sigma)}
\end{align}
as an algebraic space
\cite[Lemma 3.5]{GS2}.
Then we have
\begin{align*}
\Coh(\del \bfT_\Sigma)
=
\Coh(\colim_{0 \neq \sigma \in \Sigma}\overline{O(\sigma)})
=
\colim_{0 \neq \sigma \in \Sigma}\Coh(\overline{O(\sigma)}).
\end{align*}
By
\cite[Theorem 1.4]{GPS3}
and
\cite[Corollary 4.8]{GS2}
we have
\begin{align*}
\Fuk(H)
=
\msh_{\bL(\Sigma)}(\bL(\Sigma))^{op}
=
\colim_{0 \neq \sigma \in \Sigma} \msh_{\bL(\Sigma)}(\sigma^\perp \times \del_\infty \sigma)^{op}.
\end{align*}
As explained in the proof of
\cite[Theorem 7.13]{GS2},
we have 
\begin{align*}
\colim_{0 \neq \sigma \in \Sigma} \msh_{\bL(\Sigma)}(\sigma^\perp \times \del_\infty \sigma)^{op}
=
\colim_{0 \neq \sigma \in \Sigma}\Coh(\overline{O(\sigma)}).
\end{align*}
Combining these three equivalences,
one obtains
\pref{eq:GS}.

\subsection{Mirror pair over a fanifold}
Up to tailoring
$H$,
the mirror pair
$(H, \del \bfT_\Sigma)$
can be obtained as a special case of mirror pairs
$(\widetilde{\bfW}(\Phi), \bfT(\Phi))$
over
$\Phi$.
Let
$\Phi_\Sigma
=
\Sigma \cap S^d
\subset
S^d$
be the fanifold from
\cite[Example 2.11, 4.23]{GS1}
where
$S^d
=
\{ \Sigma^{d+1}_{i = 1} x^2_i = 1 \}
\subset
\bR^{d+1}$
is the standard embedding of the $d$-sphere.
For instance,
any vertex
$P$
is the intersection of a ray
$\rho_P \in \Sigma(1)$
and
$S^d$.
To it one associates the pair
$(\Sigma_P, M_P)
=
(\Sigma / \rho_P, M / \langle \rho_P \rangle)$.
Clearly,
$\Phi_\Sigma$
is closed
and
there is no lower dimensional strata
which are not adjacent to higher dimensional strata.

\begin{lemma} \label{lem:A-matching}
There are open neighborhoods of
$\Core(H)$
in
$H$
and
$\widetilde{\bL}(\Phi_\Sigma)$
in
$\widetilde{\bfW}(\Phi_\Sigma)$
which are isomorphic as a Weinstein manifold.
\end{lemma}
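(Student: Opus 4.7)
The plan is to compare the two Weinstein manifolds piece-by-piece along the natural bijection between nonzero cones $\sigma \in \Sigma$, strata $S_\sigma = \sigma \cap S^d$ of $\Phi_\Sigma$, and open sets $\bL(\Sigma/\sigma) \times \del_\infty \sigma$ in the cover of $\Core(H)$ from \pref{lem:cover}.

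First, I would fix the identification $\Core(H) = \del_\infty \bL(\Sigma)$ coming from Nadler's computation of the skeleton of the tailored pants $\tilde{P}_d$ together with the pants decomposition of $H$, as recalled in the preamble. Under this identification, the open cover of \pref{lem:cover} corresponds to a stratification of $\Core(H)$ indexed by the nonzero cones of $\Sigma$. On the other hand, by the inductive construction recalled in Section 3.3, the relative skeleton $\widetilde{\bL}(\Phi_\Sigma)$ is obtained by gluing the pieces $\bL(\Sigma/\sigma) \times S_{\sigma,\circ}$, one for each nonzero $\sigma \in \Sigma$, where $S_{\sigma,\circ}$ is canonically diffeomorphic to $\del_\infty \sigma$. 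The two stratifications are thus in natural bijection.

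Second, I would identify the local Weinstein thickenings of each piece on both sides. On the fanifold side, the local model at $S_\sigma$ is by construction the product $T^* \widehat{M}_{S_\sigma} \times T^* S_{\sigma,\circ}$, with the skeleton factoring as $\bL(\Sigma/\sigma) \times S_{\sigma,\circ}$. On the hypersurface side, the Weinstein neighborhood of $\bL(\Sigma/\sigma) \times \del_\infty \sigma$ inside $H$ is described, via Nadler's local model for the tailored pants, as a neighborhood of $\bL(\Sigma/\sigma)$ in $T^* \widehat{M}_{S_\sigma}$ crossed with a piece of $T^* \del_\infty \sigma$. After identifying $S_{\sigma,\circ}$ with $\del_\infty \sigma$, these local models agree up to Weinstein deformation.

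Third, I would check that the gluings of these local pieces match. On the fanifold side this is explicit: the handle attachments are performed using the standard contact neighborhoods from \pref{lem:coordinates}, which depend only on the combinatorics of $\Sigma$. On the hypersurface side, the gluing of neighboring open sets in the cover of $\Core(H)$ is governed by the inclusions $\bL_{\sigma'} \subset \overline{\bL_\sigma}$ for $\sigma \subset \bar{\sigma}'$ in $T^* \widehat{M}$, which are the origin of the isomorphisms in the proof of \pref{lem:cover}. Both types of gluing are thus determined by the same combinatorial data and realized by the same standard contact normal forms, so the piece-by-piece identifications assemble into a global Weinstein isomorphism between open neighborhoods of $\widetilde{\bL}(\Phi_\Sigma)$ and $\Core(H)$ respectively.

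The main obstacle will be the second step, where one has to confirm that Nadler's local Weinstein model for the tailored pants at each stratum truly matches the product model $T^* \widehat{M}_{S_\sigma} \times T^* S_{\sigma,\circ}$ used in the fanifold construction. Since both structures are determined by the same combinatorial data (the quotient fan $\Sigma/\sigma$ and the quotient torus $\widehat{M}_{S_\sigma}$) and are Weinstein thickenings of the same singular Lagrangian, this should ultimately reduce to an application of a Weinstein neighborhood theorem, but the identification of the Liouville forms near the boundary of each stratum may require careful bookkeeping analogous to Lemmas \pref{lem:2.25} and \pref{lem:2.32}.
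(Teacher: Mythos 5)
Your plan correctly reduces the problem to matching local models and their gluings, but it stops short precisely at the point you yourself flag as the main obstacle: you say the matching of Liouville structures ``should ultimately reduce to an application of a Weinstein neighborhood theorem'' with ``careful bookkeeping,'' without actually supplying that argument. This is a genuine gap, because a Weinstein neighborhood theorem gives local symplectomorphisms near each stratum of the singular Lagrangian, but assembling these into an \emph{exact} symplectomorphism of Weinstein manifolds requires controlling the Liouville primitives and vector fields on the overlaps, which is exactly what is nontrivial here. Your appeal to ``the same combinatorial data'' and ``the same standard contact normal forms'' is not yet a proof of that compatibility.

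The paper resolves this with two moves you do not make. First, it decomposes $\Phi_\Sigma$ by top-dimensional cones $\sigma \in \Sigma_{\max}$ rather than by all nonzero cones: each subfanifold $\Phi_\sigma = \bar\sigma \cap S^d$ yields, through the handle-attachment construction, exactly the open neighborhood $U_d$ of $\Core(\tilde P_d)$ in Nadler's tailored pants (transformed so its tropicalization is dual to $\sigma$), so the hypersurface side is matched pants-by-pants against Nadler's Theorem 5.13 directly rather than stratum-by-stratum against reconstructed local models. Second, and decisively, the paper observes that all the Weinstein handle attachments over $\Phi_\Sigma$ can be carried out inside the ambient $M^\vee_\bC \cong T^* T^{d+1}$; therefore the Weinstein structure on $\widetilde{\bfW}(\Phi_\Sigma)$ is simply the restriction of the canonical one on $T^* T^{d+1}$, which is also where $H$ gets its Weinstein structure. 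This ambient embedding makes the global matching of Liouville data automatic and replaces the bookkeeping you anticipated. Without this observation (or something equivalent), your stratum-by-stratum plan does not close.
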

\begin{proof}
For each top dimensional cone
$\sigma \in \Sigma_{\max}$,
consider the subfanifold
$\Phi_\sigma = \bar{\sigma} \cap S^d$
of
$\Phi_\Sigma$.
Unwinding the proof of
\cite[Theorem 5.13]{Nad},
one sees that
the associated Weinstein sector
$\widetilde{\bfW}(\Phi_\sigma)$
can be identified with the open neighborhood
$U_d$
there of
$\Core(\tilde{P}_d)$
in
$\tilde{P}_d$,
after we canonically transform
$\tilde{P}_d$
to the pants 
whose tropicalization is dual to
$\sigma$.
Hence,
up to suitable transformation,
we may regard
$\widetilde{\bfW}(\Phi_\Sigma)$
as the union of
$U_d$
and
$\widetilde{\bL}(\Phi_\Sigma)$
as the union of
$\Core(\tilde{P}_d)$.
Then,
near their skeleta,
$H$
and
$\widetilde{\bfW}(\Phi_\Sigma)$
are symplectomorphic by the same argument as in
\cite[Theorem 5.13]{Nad}.
Since over
$\Phi_\Sigma$
the Weinstein handle attachments can be carried out in the ambient cotangent bundle
$M^\vee_\bC \cong T^* T^{d+1}$,
the Weinstein structure on
$\widetilde{\bfW}(\Phi_\Sigma)$
coincides with the restriction of the canonical one on
$M^\vee_\bC$.
\end{proof}

\begin{lemma} \label{lem:B-matching}
The algebraic space
$\bfT(\Phi_\Sigma)$
coincides with
$\del \bfT_\Sigma$.
\end{lemma}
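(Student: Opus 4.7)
The plan is to identify the two colimits term-by-term and morphism-by-morphism, and then invoke the universal property. Both sides are defined as colimits of diagrams of toric varieties glued along closed immersions, so the real content is to match the indexing categories and the diagrams.

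First, I would note that for the standard embedding $S^d \subset \bR^{d+1}$, every nonzero cone $\sigma \in \Sigma$ meets $S^d$ in a single contractible stratum $S_\sigma = \bar{\sigma}^\circ \cap S^d$ of $\Phi_\Sigma$, and every stratum of $\Phi_\Sigma$ is of this form. Under this bijection, the exit relation $S_\sigma \rightsquigarrow S_\tau$ corresponds to $\sigma \subset \bar{\tau}$, so that $\Exit(\Phi_\Sigma)$ is isomorphic (as a poset) to the poset of nonzero cones in $\Sigma$ ordered by inclusion. By the canonical fanifold structure on $\Phi_\Sigma$ from \cite[Example 4.23]{GS1}, the functor $\Exit(\Phi_\Sigma) \to \Fan^{\twoheadrightarrow}$ sends $S_\sigma$ to $(M/\langle\sigma\rangle, \Sigma/\sigma)$ and an exit morphism $S_\sigma \rightsquigarrow S_\tau$ (with $\sigma \subset \bar{\tau}$) to the morphism $(M/\langle\sigma\rangle,\Sigma/\sigma) \to (M/\langle\tau\rangle,\Sigma/\tau)$ determined by $\tau/\langle\sigma\rangle \in \Sigma/\sigma$.

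Second, applying $\bfT$ from \cite[Section 3]{GS1}, the toric variety assigned to $S_\sigma$ is $\bfT(S_\sigma) = T_{\Sigma/\sigma}$, which by standard toric geometry is precisely $\overline{O(\sigma)}$ with its reduced induced structure. The morphism associated to $S_\sigma \rightsquigarrow S_\tau$ is the closed immersion $T_{\Sigma/\tau} \hookrightarrow T_{\Sigma/\sigma}$ induced by the quotient $\Sigma/\sigma \to \Sigma/\tau$, which coincides with the closed immersion $\overline{O(\tau)} \hookrightarrow \overline{O(\sigma)}$ entering the diagram of \pref{lem:3.4.1}. Thus, under the identification of the indexing posets, the diagram $\Exit(\Phi_\Sigma)^{op} \to {}^{**}\!\DG$ defining $\bfT(\Phi_\Sigma)$ agrees with the diagram $\{0 \neq \sigma \in \Sigma\}^{op} \to {}^{**}\!\DG$ defining $\del \bfT_\Sigma$.

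Finally, both $\bfT(\Phi_\Sigma)$ and $\del \bfT_\Sigma$ are the colimits of these identified diagrams, which exist as algebraic spaces by \pref{lem:3.10} and \pref{lem:3.4.1} respectively. Uniqueness of colimits yields the desired identification. The only potential subtlety—and the step I would treat most carefully—is the check that the functor $\Exit(\Phi_\Sigma) \to \Fan^{\twoheadrightarrow}$ prescribed by the canonical fanifold structure on $\Sigma \cap S^d$ really does coincide on morphisms with the quotient maps $\Sigma/\sigma \to \Sigma/\tau$; this is essentially the content of \cite[Example 4.23]{GS1}, but one has to verify that the trivializations $\phi_{S_\sigma}$ used to endow $\Phi_\Sigma$ with a fanifold structure are compatible with the evident quotient identifications on overlaps. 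Once this compatibility is in place, the remainder of the argument is formal.
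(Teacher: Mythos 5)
Your proposal is correct and follows essentially the same route as the paper: identify $\Exit(\Phi_\Sigma)$ with the poset of nonzero cones of $\Sigma$, match the colimit diagrams term-by-term and morphism-by-morphism, and conclude from \pref{lem:3.10} and \pref{lem:3.4.1} (the paper phrases the matching via vertices $P$ and exit paths, while you give the bijection $\sigma \mapsto S_\sigma$ directly, which is a cleaner presentation of the same idea). One small slip worth correcting: the diagram defining $\bfT(\Phi_\Sigma)$ takes values in algebraic spaces, not in $^{**}\!\DG$; the latter is the codomain for $\Coh_! \circ \bfT$, not for $\bfT$ itself.
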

\begin{proof}
By
\cite[Proposition 3.13]{GS1}
we have
$\bfT(\Phi_\Sigma)
=
\colim_{\Exit(\Phi_\Sigma)^{op}} \bfT(S^{(k)})$.
As one associates the pair
$(\Sigma_P, M_P)
=
(\Sigma / \rho_P, M / \langle \rho_P \rangle)$
to each vertex
$P \in \Phi_\Sigma$,
the toric variety
$\bfT(P)$
coincides with
$\overline{O(\rho_P)}$.
Let
$\sigma_{S^{(k)}} \in \Sigma_P$
be the cone corresponding to an exit path in
$\Phi_\Sigma$
from
$P$
to
$S^{(k)}$.
Then by definition of fanifolds
$\bfT(S^{(k)})$
is the toric variety for the fan
$\Sigma_P / \sigma_{S^{(k)}}
\subset
M_\bR / \langle \sigma_{S^{(k)}} \rangle$,
which in turn is the toric orbit closure corresponding to the inverse image
$\tilde{\sigma}_{S^{(k)}}$
of
$\sigma_{S^{(k)}}$
under the quotient
$\Sigma \to \Sigma_P$. 
When
$S^{(k)}$
runs through the strata of
$\Phi_\Sigma$,
the inverse images
$\tilde{\sigma}_{S^{(k)}}$
range over nonzero cones in
$\Sigma$.
Thus from
\pref{eq:5.3}
we obtain
$\colim_{\Exit(\Phi_\Sigma)^{op}} \bfT(S^{(k)})
=
\del \bfT_\Sigma$.
\end{proof}

\subsection{Sectorial descent and HMS for very affine hypersurfaces}
By
\pref{lem:A-matching}
and
\pref{lem:B-matching}
the equivalence
\pref{eq:GS}
coincides with
\pref{eq:HMS}
for
$\Phi_\Sigma$,
i.e.
$\Fuk(\widetilde{\bfW}(\Phi_\Sigma)) \simeq \Coh(\bfT(\Phi_\Sigma))$.
Finally,
we give an affirmative answer to the conjecture raised in
\cite[Section 2.5]{GS2}.

\begin{theorem}
There is a Weinstein sectorial cover of
$H$
lifting the open cover of
$\Core(H) = \del_\infty \bL(\Phi_\Sigma)$
from
\cite[Corollary 4.8]{GS2}.
\end{theorem}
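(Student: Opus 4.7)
The plan is to produce the desired Weinstein sectorial cover of $H$ by transporting the one on $\widetilde{\bfW}(\Phi_\Sigma)$ provided by \pref{lem:W-SC} through the Weinstein identification of \pref{lem:A-matching}, then extending it from a neighborhood of the skeleton to the whole of $H$ using the Liouville flow.

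First, I invoke \pref{lem:W-SC} to obtain a Weinstein sectorial cover $\widetilde{\bfW}(\Phi_\Sigma) = \bigcup_P \widetilde{\bfW}(P)$ indexed by the $0$-strata $P$ of $\Phi_\Sigma$, which correspond bijectively to rays $\rho_P \in \Sigma(1)$. By construction, $\widetilde{\bfW}(P)$ is the inverse image under $\bar{\pi}$ of the component of the partition of $\Phi_\Sigma$ adjacent to $P$, and its restriction to the relative skeleton $\widetilde{\bL}(\Phi_\Sigma)$ is an open neighborhood of $\bL(\Sigma / \rho_P) \times \del_\infty \rho_P$.

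Second, \pref{lem:A-matching} furnishes open neighborhoods $U \subset H$ of $\Core(H)$ and $V \subset \widetilde{\bfW}(\Phi_\Sigma)$ of $\widetilde{\bL}(\Phi_\Sigma)$ together with a Weinstein isomorphism $U \cong V$. Pulling back $\{\widetilde{\bfW}(P) \cap V\}_P$ along this isomorphism gives a Weinstein sectorial cover of $U$ whose restriction to $\Core(H)$ recovers the subcover of \pref{lem:cover} indexed by rays. I then extend this cover of $U$ to a cover of $H$ by propagating the sectorial boundaries $\del^2 \widetilde{\bfW}(P) \cap V$ along the positive Liouville flow: since $H$ deformation retracts onto $\Core(H)$, every point of $H \setminus U$ is reached from $\del U$ by this flow, and the defining functions $I_i$ of the sectorial boundaries extend in a Liouville-linear fashion, preserving the compatibility conditions $dI_i|_{C_i} \neq 0$, $dI_i|_{C_j} = 0$ for $i \neq j$, and $\{I_i, I_j\} = 0$. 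The convex completions of symplectic reductions of the resulting strata remain Weinstein since this property already held on $U$ and is invariant under Liouville-cylindrical extension.

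Finally, I verify that the restriction to $\Core(H)$ reproduces the cover of \pref{lem:cover}. The pieces attached to rays come directly from the construction; the higher-dimensional pieces arise from intersections, as for any nonzero cone $\sigma$ spanned by rays $\rho_1, \ldots, \rho_k$ one has
\begin{align*}
\left( \bigcap_{i=1}^k \widetilde{\bfW}(P_{\rho_i}) \right) \cap \Core(H)
\simeq
\bL(\Sigma / \sigma) \times \del_\infty \sigma,
\end{align*}
because the intersection of the components adjacent to the vertices $P_{\rho_i}$ in the partition of $\Phi_\Sigma$ lies precisely over the star of $\sigma$. The hard part is the Liouville-flow extension in the third step: one must check that the sectorial compatibility conditions remain globally valid on $H$, particularly near its cylindrical end where the $I_i$ must be honestly linear. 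This is arranged by choosing the boundaries $\del^2 \widetilde{\bfW}(P) \cap V$ to be already cylindrical near $\del_\infty V$ before transporting, which is possible because the perturbations used in \pref{lem:W-sector} can be taken to respect the conic structure of $\widetilde{\bfW}(\Phi_\Sigma)$ induced from $M^\vee_\bC \cong T^* T^{d+1}$.
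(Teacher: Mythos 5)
Your proof is correct and follows essentially the same route as the paper: both transport the Weinstein sectorial cover $\widetilde{\bfW}(\Phi_\Sigma) = \bigcup_P \widetilde{\bfW}(P)$ of \pref{lem:W-SC} to $H$ via the identification of \pref{lem:A-matching} and then match the relative skeleta against \pref{lem:cover}. You spell out in more detail than the paper the Liouville-flow extension from a neighborhood of $\Core(H)$ to all of $H$ and the explicit matching of intersections with the higher-cone pieces of the cover, both of which the paper leaves implicit under its finite-type/completion conventions.
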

\begin{proof}
Each piece
$\widetilde{\bfW}(P)$
of our Weinstein sectorial cover
$\widetilde{\bfW}(\Phi_\Sigma)
=
\bigcup_P \widetilde{\bfW}(P)$
is a Weinstein sector with skeleton
$\bL(\Sigma / \rho_P)$
obtained from
$T^* \widehat{M}_P$
for a vertex
$P \in \Phi_\Sigma$
by the inductive Weinstein handle attachments.
Intersections of
$\widetilde{\bfW}(P)$
are Weinstein sectors-with-sectorial-corners
whose skeleta are given by intersections of
$\bL(\Sigma / \rho_P)$.
Since by
\cite[Corollary 4.8]{GS2}
the union of
$\bL(\Sigma / \rho_P)
\cong
\bL(\Sigma / \rho_P) \times \del_\infty \rho_P$
covers
$\del_\infty \bL(\Sigma)$
when
$\rho_P$
runs through the rays in
$\Sigma$,
the Weinstein sectorial cover
$\widetilde{\bfW}(\Phi_\Sigma)
=
\bigcup_P \widetilde{\bfW}(P)$
gives the desired lift.
\end{proof}



\end{document}